\newcommand{\longsquiggly}{\xymatrix{{}\ar@{<~>}[r]&{}}}
\theoremstyle{plain}
\newtheorem{thm}{Theorem}[section]
\newtheorem{prop}[thm]{Proposition}
\newtheorem{cor}[thm]{Corollary}
\theoremstyle{definition}
\newtheorem{definition}[thm]{Definition}
\newtheorem{con}[thm]{Conjecture}
\newtheorem{qstn}[thm]{Question}
\newtheorem*{notation}{Notation}
\theoremstyle{remark}
\newtheorem{rem}{Remark}
\newtheorem*{rems}{Remarks}
\newcommand{\R}{\mathbb{R}}
\renewcommand{\d}{\textnormal{ d}}
\newcommand{\grad}{\nabla}
\newcommand{\tr}{\textnormal{tr}}
\newcommand{\hidethis}[1]{}
\newcommand{\E}{\mathbb{E}}
\newcommand{\prob}{\mathbb{P}}
\newcommand{\var}{\textnormal{Var}}
\newcommand{\del}{\partial}
\renewcommand{\div}{\textnormal{div}}
\newcommand{\divtil}{\widetilde{\textnormal{div}}}
\newcommand{\gtil}{\widetilde{\mathcal{G}}}
\newcommand{\RR}{\mathbb{R}}
\begin{document}

\title[Entropic and functional forms]{Entropic and functional forms of the dimensional Brunn--Minkowski inequality in Gauss space}
\author{Gautam Aishwarya}
\address{Department of Mathematics, Michigan State University, East Lansing 48824, USA.}
\email{aishwary@msu.edu}
\author{Dongbin Li}
\address{Faculty of Science - Mathematics and Statistical Sciences, University of Alberta, Edmonton, AB T6G 2R3, Canada.}
\email{dongbin@ualberta.ca}
\begin{abstract}
Given even strongly log-concave random vectors $X_{0}$ and $X_{1}$ in $\R^n$, we show that a natural joint distribution $(X_{0},X_{1})$ satisfies
\begin{equation}
    e^{ - \frac{1}{n}D ((1-t)X_{0} + t X_{1} \Vert Z)} \geq (1-t) e^{ - \frac{1}{n}D (X_{0} \Vert Z)} + t e^{ - \frac{1}{n}D ( X_{1} \Vert Z)},  
\end{equation}
where $Z$ is distributed according to the standard Gaussian measure $\gamma$ on $\R^n$, $t \in [0,1]$, and $D(\cdot \Vert Z)$ is the Gaussian relative entropy. This extends and provides a different viewpoint on the corresponding geometric inequality proved by Eskenazis and Moschidis \cite{EskenazisMoschidis21}, namely that 
\begin{equation} 
\gamma \left( (1-t) K_{0} + t K_{1} \right)^{\frac{1}{n}} \geq (1-t) \gamma (K_{0})^{\frac{1}{n}} + t \gamma (K_{1})^{\frac{1}{n}},  
\end{equation}
when $K_{0}, K_{1} \subseteq \R^n$ are origin-symmetric convex bodies. As an application, using Donsker--Varadhan duality, we obtain Gaussian Borell--Brascamp--Lieb inequalities applicable to even log-concave functions, which serve as functional forms of the Eskenazis--Moschidis inequality.  
\end{abstract}
\thanks{{\it MSC classification}:
37C10, 
            94A17, 
            52A40, 
            52A20, 
		49Q22. 
	\\\indent GA is supported by NSF-DMS 2154402. DL acknowledges the support of the Natural Sciences and Engineering Research Council of Canada and the Department of Mathematical and Statistical Sciences at the University of Alberta.
}

\maketitle
\section{Introduction and Main Results}
Let $\gamma$ denote the standard Gaussian probability measure on $\R^n$, $\d \gamma (x) \propto e^{- \frac{1}{2}\vert x \vert^{2}} \d x,$ where $\vert \cdot \vert$ denotes the Euclidean norm. It was shown by Eskenazis and Moschidis \cite{EskenazisMoschidis21} that, if $K_{0}$ and $K_{1}$ are origin-symmetric convex bodies, then
\begin{equation} \label{eq: GaussiandimBM}
\gamma \left( (1-t) K_{0} + t K_{1} \right)^{\frac{1}{n}} \geq (1-t) \gamma (K_{0})^{\frac{1}{n}} + t \gamma (K_{1})^{\frac{1}{n}}.  
\end{equation}
Here $(1-t) K_{0} + t K_{1} = \{ (1-t) x_{0} + t x_{1} : x_{0} \in K_{0} , x_{1} \in K_{1} \}$ denotes the collection of $t$-midpoints of all segments from $K_{0}$ to $K_{1}$. Observe that the inequality \eqref{eq: GaussiandimBM} cannot hold for all compact sets $K_{0}, K_{1}$. This can easily be seen by fixing a set $K_{0}$ of positive Gaussian measure, $K_{1} = \{ x \}$, and sending $x \to \infty$. Without extra conditions on $K_{0}$ and $K_{1}$, the Gaussian measure only satisfies 
\begin{equation}
    \gamma \left( (1-t) K_{0} + t K_{1} \right) \geq \gamma (K_{0})^{1-t} \gamma (K_{1})^{t},
\end{equation}
by virtue of being log-concave. Recall that a measure $\nu$ is said to be \emph{log-concave} if it has a density of the form $\frac{\d \nu}{\d x} = e^{-V}$, $V$ convex, with respect to the Lebesgue measure.

The inequality \eqref{eq: GaussiandimBM} was conjectured by Gardner and Zvavitch \cite{GardnerZvavitch10}, originally for convex bodies containing the origin. But soon after, Nayar and Tkocz \cite{NayarTkocz13} found a counterexample and suggested the assumption of symmetry about the origin. It must be mentioned that the work of Eskenazis and Moschidis closed the proof of \eqref{eq: GaussiandimBM} by verifying a sufficient condition introduced by Kolesnikov and Livshyts \cite{KolesnikovLivshyts21}, which is itself based on a machinery developed by Kolesnikov and E. Milman \cite{KolesnikovMilman17, KolesnikovMilman18}.

More generally, the following conjecture has garnered a lot of attention in the last few years. 
\begin{con} \label{con: DimBM}
Let $\nu$ be an even log-concave measure on $\RR^{n}$.
Then, for origin-symmetric convex bodies $K_{0},K_{1} \subseteq\RR^{n}$,
we have 
\begin{equation} \label{eq: DimBM}
\nu\left((1-t)K_{0}+t K_{1}\right)^{\frac{1}{n}}\ge(1-t)\nu(K_{0})^{\frac{1}{n}}+ t\nu(K_{1})^{\frac{1}{n}}.
\end{equation}
\end{con}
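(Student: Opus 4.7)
My plan is to mimic the entropic strategy the paper develops for the Gaussian case and attempt to push it through for a general even log-concave reference measure $\nu = e^{-V} \dd x$ with $V$ even and convex. First I would formulate the entropic strengthening of Conjecture \ref{con: DimBM}: for sufficiently regular even probability densities $\mu_{0} = f_{0} \nu$ and $\mu_{1} = f_{1} \nu$, construct a coupling $(X_{0},X_{1})$ with $X_{i} \sim \mu_{i}$ and let $\mu_{t}$ denote the law of the displacement average $(1-t) X_{0} + t X_{1}$; the target inequality becomes
\begin{equation}
e^{-\frac{1}{n} \DD{\mu_{t}}{\nu}} \geq (1-t) e^{-\frac{1}{n} \DD{\mu_{0}}{\nu}} + t e^{-\frac{1}{n} \DD{\mu_{1}}{\nu}} .
\end{equation}
Specializing to $f_{i} = \ind_{K_{i}} / \nu(K_{i})$ and using $\DD{\ind_{K_{i}} \nu / \nu(K_{i})}{\nu} = - \log \nu(K_{i})$ recovers \eqref{eq: DimBM}.

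Second, I would take the coupling produced by the Brenier optimal transport map: $X_{1} = \grad \varphi (X_{0})$, where $\varphi$ is convex and, thanks to the evenness of $\mu_{0}$ and $\mu_{1}$, may be chosen with odd gradient. This yields McCann's displacement interpolation $\mu_{t} = ((1-t)\mathrm{id} + t \grad \varphi)_{\#} \mu_{0}$, which remains even at every time $t$, consistent with the symmetry hypothesis.

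Third, to verify that $t \mapsto e^{- \DD{\mu_{t}}{\nu}/n}$ is concave I would differentiate twice in $t$. The now-standard calculation of Otto--Villani and von Renesse--Sturm gives a Bochner-type formula for $\frac{\dd^{2}}{\dd t^{2}} \DD{\mu_{t}}{\nu}$ whose integrand couples $\Vert \grad v_{t} \Vert_{\mathrm{HS}}^{2}$ with $\< \grad^{2} V \cdot v_{t} , v_{t} \>$, where $v_{t}$ is the velocity field of the interpolation. The concavity of $e^{- \DD{\cdot}{\nu}/n}$ then reduces to a dimensional improvement of the form $\frac{1}{n} \left( \frac{\dd}{\dd t} \DD{\mu_{t}}{\nu} \right)^{2} \leq \frac{\dd^{2}}{\dd t^{2}} \DD{\mu_{t}}{\nu}$, to be verified at $t=0$ (and, by symmetry of the construction, at generic $t$ by repeating the argument from an intermediate base point).

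The main obstacle is precisely this dimensional improvement. For $V(x) = \vert x \vert^{2}/2$ one has $\grad^{2} V = \mathrm{Id}$, and evenness of $\mu_{t}$ forces $\int v_{t} \, \dd \mu_{t} = 0$, so a weighted Poincar\'e inequality restricted to odd vector fields furnishes the missing $1/n$ factor; this is the analytic heart of the Eskenazis--Moschidis proof and of the Kolesnikov--Livshyts sufficiency condition. For a general even log-concave $\nu$ the analogous spectral estimate on vector fields is open and essentially equivalent to Conjecture \ref{con: DimBM} itself, so the hardest step in carrying out the plan is to produce such a Poincar\'e-type inequality beyond the Gaussian setting---perhaps by exploiting oddness of $\grad \varphi$, convexity of $V$, and an interpolation argument that reduces the general case to the constant-curvature Gaussian one.
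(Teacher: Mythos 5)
The statement you are asked to prove is Conjecture~\ref{con: DimBM}, which is an open conjecture; the paper does not prove it, and neither do you --- your final paragraph correctly concedes that the required spectral (Poincar\'e-type) estimate on vector fields for a general even log-concave reference measure is open and ``essentially equivalent to Conjecture~\ref{con: DimBM} itself.'' So what you have written is a research plan, not a proof, and it is important to recognize that the paper contains no proof of this statement either: what it actually proves is Theorem~\ref{thm: mainEnt} and Corollary~\ref{cor: EMBM}, the Gaussian special case $\nu = \gamma$, which you appear to be taking as a template.

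Even as an account of the Gaussian strategy, your outline deviates from the paper at a consequential point. You propose to couple $X_{0}$ and $X_{1}$ by the Brenier map $\grad\varphi$ directly between them (McCann's displacement interpolation). But this is exactly the coupling for which the paper states the required odd-function Poincar\'e inequality for the interpolant $X_{t}$ is unknown --- that is Question~\ref{qstn: poincare}, which the paper explicitly leaves open. The paper's proof circumvents this obstacle by using a different coupling: $(X_{0}, X_{1}) = (T_{0}(Z), T_{1}(Z))$, where $T_{0}, T_{1}$ are Brenier maps \emph{from the standard Gaussian $Z$} to $X_{0}, X_{1}$, so that $X_{t} = T_{t}(Z)$ with $T_{t} = (1-t)T_{0} + tT_{1}$ a $1$-Lipschitz map by Caffarelli's contraction theorem. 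This lets the key estimate be pulled back to the Gaussian Poincar\'e inequality together with a trace-monotonicity argument, rather than requiring a Poincar\'e inequality for $\mu_{t}$ itself. If your goal is to reproduce the Gaussian case, you should switch to this coupling-through-$Z$; if your goal is the general even log-concave case, you are attempting an open problem, and the honest framing is that the plan stalls at step three for any currently known coupling, not just the one you chose.
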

One reason why Conjecture \ref{con: DimBM} is of substantial interest is that it follows from the celebrated \emph{log-Brunn--Minkowski conjecture} of B\"or\"oczky, Lutwak, Yang, and Zhang \cite{BoroczkyLutwakYangZhang12}. This implication was shown by Livshyts, Marsiglietti, Nayar, and Zvavitch \cite{LivshytsMarsigliettiNayarZvavitch17}. Exciting recent developments include works by Livshyts \cite{Livshyts23}, Cordero-Erausquin and Rotem \cite{Cordero-ErasquinRotem23}.

Recently, Aishwarya and Rotem \cite{AishwaryaRotem23} took a completely different route to prove dimensional inequalities such as in \eqref{eq: DimBM}, using entropy. 
\begin{definition}
    Let $\nu$ be a $\sigma$-additive Borel measure on $\R^n$. For a probability measure $\mu$, we define the \emph{relative entropy of $\mu$ with respect to $\nu$} by 
    \begin{equation}
D(\mu \Vert \nu) =
\begin{cases}
\int \left( \frac{\d \mu}{\d \nu} \right) \log \left( \frac{\d \mu}{\d \nu} \right) \d \nu, & \textnormal{ if } \mu \textnormal{ has density w.r.t. } \nu , \\
+ \infty, & \textnormal{ otherwise. }  \\
\end{cases}
\end{equation}
\end{definition}
\begin{notation}
    The relative entropy $D(\mu \Vert \nu)$ is also written as $D(X \Vert Y)$ when $\nu$ is a probability measure, and $X, Y$ are $\R^n$-valued random vectors with distributions $\mu, \nu$, respectively. Note that the joint distribution $(X,Y)$ is not specified because it is immaterial for this definition. See also Definition \ref{def: randomvectors}.
\end{notation}
The technique in \cite{AishwaryaRotem23} is based on the variational principle \cite[Lemma 2.7]{AishwaryaRotem23}:
\begin{equation} \label{eq: measuremaxent}
    \nu (K) = \sup_{\mu \in \mathcal{P}(K)} e^{- D(\mu \Vert \nu )},
\end{equation}
which holds for every compact set $K$ and is attained by the normalised restriction $\nu_{K}$ of $\nu$ to $K$, that is, $\nu_{K}(E) = \frac{\nu(E \cap K)}{\nu(K)}$ for every Borel set $E$. As in formula \eqref{eq: measuremaxent}, we will consistently write $\mathcal{P}(K)$ for the collection of all probability measures on a given set $K$. 

Suppose $\nu$ is a probability measure, and $Y$ is a random vector with distribution $\nu$. In light of the above variational formula, to prove the inequality \eqref{eq: DimBM}, it suffices to show the existence of a joint distribution $(X_{0} , X_{1})$ with the marginals $X_{0},X_{1}$ having distributions $\nu_{K_{0}} , \nu_{K_{1}}$, respectively, such that the following entropy inequality holds: 
\begin{equation} \label{eq: dimBMent}
    e^{ - \frac{1}{n}D ((1-t)X_{0} + t X_{1} \Vert Y)} \geq (1-t) e^{ - \frac{1}{n}D (X_{0} \Vert Y)} + t e^{ - \frac{1}{n}D ( X_{1} \Vert Y)}.  
\end{equation}
This is because the distribution of $(1-t)X_{0} + t X_{1}$ lies in $\mathcal{P} \left( (1-t) K_{0} + t K_{1} \right)$. 

The definition and remarks below clarify our use of some standard terminology regarding joint distributions of random vectors.

\begin{definition} \label{def: randomvectors}
\begin{enumerate}
\item[] 
  \item  Let $X_{0}, X_{1}$ be $\R^n$-valued random vectors. By a joint distribution with marginals $X_{0}, X_{1}$ we mean an $\R^n \times \R^n$-valued random vector $\bar{X}$ such that $\prob \{ \bar{X} \in E \times \R^n \} = \prob \{ X_{0} \in E \} $ and $\prob \{ \bar{X} \in \R^n \times E' \} = \prob \{ X_{1} \in E' \} $ for Borel sets $E, E'$. Here $\prob$ denotes the measure on the underlying probability space over which our random vectors are defined. Such an $\bar{X}$ is often written simply as $(X_{0}, X_{1})$.

 \item   Likewise, a coupling of $\mu_{0}, \mu_{1} \in \mathcal{P}(\R^n)$ is a $\pi \in \mathcal{P}(\R^n \times \R^n)$ such that $\pi(E \times \R^n) = \mu_{0}(E), \pi(\R^n \times E') = \mu_{1}(E')$ for Borel sets $E,E'$.
 \end{enumerate}
\end{definition}
\begin{rems}
\begin{itemize}
\item[] 
    \item If $X_{i}$ has distribution $\mu_{i}$, that is $\prob \{ X_{i} \in E \} = \mu_{i}(E)$ for Borel sets $E$ and $i=0,1$, then the distribution of every joint distribution $(X_{0}, X_{1})$ is a coupling $\pi$ and vice versa. However, we will sometimes also call $(X_{0}, X_{1})$ a coupling. 
    \item If $(X_{0}, X_{1})$ has distribution $\pi$, then the distribution of the corresponding $(1-t) X_{0} + t X_{1}$ is given by the pushforward measure $\left[ (x,y) \mapsto (1-t) x + t y \right]_{\#} \pi \in \mathcal{P}(\R^n)$.
 \end{itemize}   
\end{rems}
The coupling $(X_{0} , X_{1})$ used in \cite{AishwaryaRotem23} to obtain several results is the so-called \emph{optimal coupling} for the \emph{Monge--Kantorovich problem with quadratic cost}, namely the one that minimises $\E \vert X_{0} - X_{1} \vert^{2}$. For example, \cite[Theorem 1.3]{AishwaryaRotem23} implies that, when $Y=Z$ has standard Gaussian distribution, the inequality \eqref{eq: dimBMent} holds for the optimal coupling with a worse exponent ($\frac{1}{2n}$ instead of $\frac{1}{n}$) but for a larger class ($K_{0}, K_{1}$ are only assumed to be \emph{star-shaped} with respect to the origin, not necessarily symmetric or convex). This was the first time that a dimensional Brunn--Minkowski inequality was obtained for the Gaussian measure without convexity assumptions on the admissible sets (which is not possible with the earlier approach). However, while trying to obtain an inequality of the form \eqref{eq: dimBMent} that would strengthen the result of Eskenazis and Moschidis, the authors in \cite{AishwaryaRotem23} faced a very interesting problem.
\begin{qstn} \label{qstn: poincare} \cite{AishwaryaRotem23}
    Suppose $X_{0}, X_{1}$ are $\R^n$-valued random vectors with even strongly log-concave distributions, and assume that $(X_{0} , X_{1})$ have the optimal coupling. Is it true that each $X_{t} = (1-t) X_{0} + t X_{1}$, $t\in (0,1)$, satisfies the Poincar\'e inequality for odd functions with constant $1$?
\end{qstn}
Recall that a random vector $X$ is said to satisfy a Poincar\'e inequality with constant $1$ over a class of functions $\mathcal{F}$, if for every function $f \in \mathcal{F}$, we have $\var (f(X)) \leq \E \vert \grad f (X) \vert^{2}$. Further, a \emph{strongly log-concave} random vector is one with distribution $\mu$ such that $\frac{\d \mu}{\d \gamma}$ is a log-concave function (in this case, $\mu$ is said to be a strongly log-concave measure). The relevance of this property in our context stems from the fact that $\gamma_{K}$ is strongly log-concave whenever $K$ is a convex body. \cite[Theorem 4.5]{AishwaryaRotem23} shows that the desired inequality \eqref{eq: dimBMent} for $Y=Z$ holds for the optimal coupling, and $X_{0}, X_{1}$ even strongly log-concave, if the answer to Question \ref{qstn: poincare} is positive.

The first main result of the present work is that there exists a coupling of even strongly log-concave random vectors such that \eqref{eq: dimBMent} holds when $Y=Z$.
\begin{thm} \label{thm: mainEnt}
    Let $X_{0}, X_{1}$ be $\R^n$-valued random vectors with even strongly log-concave distributions. Then, there is a coupling $(X_{0} , X_{1})$ of $X_{0}$ and $X_{1}$ such that
    \begin{equation} \label{eq: dimBMentZ}
    e^{ - \frac{1}{n}D ((1-t)X_{0} + t X_{1} \Vert Z)} \geq (1-t) e^{ - \frac{1}{n}D (X_{0} \Vert Z)} + t e^{ - \frac{1}{n}D ( X_{1} \Vert Z)}.  
\end{equation}
Moreover, for this coupling, we have equality if and only if $X_{0}$ and $X_{1}$ have the same distribution. 
\end{thm}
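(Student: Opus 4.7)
The plan is to construct the coupling using Caffarelli's contraction theorem. Since $X_0$ and $X_1$ are even and strongly log-concave, their Brenier maps $T_i = \grad\phi_i$ transporting $\gamma$ to the laws of $X_i$ can be chosen with $\phi_i$ even and convex, so that $T_i$ is $1$-Lipschitz and odd. I would define the joint distribution by $(X_0,X_1) := (T_0(Z),T_1(Z))$ for a common $Z \sim \gamma$. Then $X_t = T_t(Z)$ where $T_t := (1-t)T_0 + tT_1 = \grad\phi_t$ with $\phi_t := (1-t)\phi_0 + t\phi_1$ still even and convex; in particular $T_t$ is again $1$-Lipschitz, odd, and a diffeomorphism since $DT_t \succ 0$.

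A direct change of variables applied to $X_t = T_t(Z)$ yields
\[
f(t) := D(X_t \Vert Z) = -\tfrac{n}{2} + \tfrac{1}{2}\E|T_t(Z)|^2 - \E\log\det DT_t(Z).
\]
Concavity of $g(t) := e^{-f(t)/n}$ is equivalent to $n f''(t) \geq (f'(t))^2$. Differentiating twice under the expectation, with $\dot T := T_1 - T_0$ and the symmetric matrix $P_t := (DT_t)^{-1/2}(DT_1 - DT_0)(DT_t)^{-1/2}$, I obtain
\[
f'(t) = \E[T_t \cdot \dot T - \tr P_t], \qquad f''(t) = \E[|\dot T|^2 + \tr(P_t^2)],
\]
so the theorem reduces to the integral inequality
\[
\big(\E[T_t \cdot \dot T - \tr P_t]\big)^2 \;\leq\; n\, \E\big[|\dot T|^2 + \tr(P_t^2)\big].
\]

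The main obstacle is closing this inequality with the sharp constant $n$. Two natural bounds come for free: the matrix Cauchy--Schwarz inequality gives $(\tr P_t)^2 \leq n \tr(P_t^2)$ pointwise, hence $(\E[\tr P_t])^2 \leq n\E[\tr P_t^2]$; and Caffarelli's estimate together with oddness gives $|T_t(z)| \leq |z|$ pointwise, so $\E|T_t|^2 \leq n$ and $(\E[T_t\cdot\dot T])^2 \leq n\E|\dot T|^2$ by Cauchy--Schwarz in $L^2(\gamma)$. The difficulty is that the cross term $-2\E[T_t\cdot\dot T]\cdot\E[\tr P_t]$ need not have a fixed sign, so the naive splitting $(a-b)^2 \leq 2(a^2+b^2)$ would lose a factor of $2$ --- this is precisely the gap between our target exponent $1/n$ and the weaker exponent $1/(2n)$ obtained in \cite[Theorem 1.3]{AishwaryaRotem23}. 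My approach is to combine both contributions to $f'(t)$ into a single $L^2(\gamma)$ pairing using Gaussian integration by parts, writing $\E[\tr((DT_t)^{-1}D\dot T)] = \E[\dot T \cdot ((DT_t)^{-1}Z - \div(DT_t)^{-1})]$; a single application of Cauchy--Schwarz then replaces the sign-sensitive cross term by a deterministic spectral estimate that should be controllable via the bound $DT_t \preceq I$ and cancellations enforced by the evenness of $\phi_i$. Finally, equality throughout the Cauchy--Schwarz step forces $\dot T \equiv 0$ almost surely, and uniqueness of the Brenier map from $\gamma$ gives $X_0 \stackrel{d}{=} X_1$.
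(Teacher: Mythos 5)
Your setup is identical to the paper's: you use the Brenier maps $T_i = \grad\phi_i$ from $\gamma$ to the laws of $X_i$, couple via $(T_0(Z),T_1(Z))$, observe that Caffarelli's theorem makes each $T_t=(1-t)T_0+tT_1$ a $1$-Lipschitz odd gradient map, and correctly reduce the concavity of $e^{-f(t)/n}$ to $\big(f'(t)\big)^2\le n\,f''(t)$. Your Lagrangian formulas $f'(t)=\E[T_t\cdot\dot T-\tr P_t]$ and $f''(t)=\E[|\dot T|^2+\tr(P_t^2)]$ are the pushed-forward versions of the Eulerian identities the paper obtains through the continuity equation (Proposition \ref{prop: derivativesofentropy}), and you correctly diagnose that a naive splitting loses a factor of $2$ because the cross term is sign-indefinite. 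So far, everything you say either matches or is equivalent to the paper.

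The genuine gap is in how you propose to close $(f')^2\le n f''$. Your plan is to combine the two contributions to $f'$ into one $L^2(\gamma)$ pairing via Gaussian integration by parts, writing $f'(t)=\E[\dot T\cdot W]$ with $W := T_t-(DT_t)^{-1}Z+\div(DT_t)^{-1}$, and then apply Cauchy--Schwarz once. But a single Cauchy--Schwarz gives $(f')^2\le\E|\dot T|^2\cdot\E|W|^2$, and for this to imply the target you would need $\E|W|^2\le n\bigl(1+\E\tr(P_t^2)/\E|\dot T|^2\bigr)$, an estimate that entangles $W$ with $\dot T$ rather than being ``a deterministic spectral estimate.'' The simplest clean sufficient condition, $\E|W|^2\le n$, is false in general: for $T_t=sI$ with $s\in(0,1)$ one finds $W=(s-s^{-1})Z$, so $\E|W|^2=n(s-s^{-1})^2>n$ once $s$ is small. (The theorem still holds in that example, but only because the neglected $\tr(P_t^2)$ term picks up the slack.) So the sketched step is not merely incomplete --- as stated it does not close, and no amount of spectral bookkeeping around $DT_t\preceq I$ alone will make it close because the required inequality genuinely couples $\E|W|^2$ to the ratio $\E\tr(P_t^2)/\E|\dot T|^2$.

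The paper's route is different in precisely the place where your argument stalls. It introduces the auxiliary field $u_t(x)=v_t(x)-\tfrac{l}{n}x$ (with $l=\int\divtil(v_t)\,\d\mu_t$), following Eskenazis--Moschidis, expands $\tr(\grad v_t)^2$ in terms of $u_t$, and then needs $\int\tr(\grad u_t)^2\,\d\mu_t\ge\int|u_t|^2\,\d\mu_t$. Since $v_t$, hence $u_t$, need not be a gradient field, it cannot simply apply a Poincar\'e inequality to its components; instead it proves the pointwise trace comparison
\begin{equation}
\tr\bigl[\grad u_t(T_t(x))\bigr]^2 \;=\;\tr(AB)^2\;\ge\;\tr(A^2)\;=\;\tr\bigl(\grad[u_t\circ T_t](x)\bigr)^2,
\end{equation}
valid because $A=\grad[u_t\circ T_t]$ and $B=(\grad T_t)^{-1}$ are symmetric with $B\succeq I$, and then applies the Gaussian Poincar\'e inequality componentwise to $u_t\circ T_t$, which \emph{is} a gradient field (being a combination of $\grad\phi_0,\grad\phi_1$) and is odd. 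This structural observation --- that the Lagrangian pullback $u_t\circ T_t$ is a gradient even when $u_t$ is not --- is the key idea your proposal is missing. Two smaller omissions: you implicitly assume enough regularity to differentiate under the integral and invert $DT_t$ (the paper first assumes $\grad^2 U_i\le\kappa I$ and then removes this by Ornstein--Uhlenbeck regularization and lower semicontinuity of relative entropy); and the equality case does not follow from Cauchy--Schwarz alone (equality there gives $\dot T\propto W$, not $\dot T\equiv 0$), whereas the paper deduces it from the strict inequality $\sigma^{(t)}(\theta)>t$ for $\theta>0$ with $\theta^2=\E|X_0-X_1|^2$.
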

\begin{rem} \label{rem: curvaturemain}
    The proof establishes the stronger inequality,
    \begin{equation} \label{eq: globalcd2n}
    e^{-\frac{1}{n} D (X_{t} \Vert Z)} \geq \sigma^{(1-t)} \left( \theta \right) e^{-\frac{1}{n} D (X_{0} \Vert Z)} + \sigma^{(t)} \left( \theta \right)  e^{-\frac{1}{n} D (X_{1} \Vert Z)},
\end{equation}
where $\theta = \left( \E \vert X_{0} - X_{1} \vert^{2} \right)^{\frac{1}{2}}$, and
 \begin{equation}
\sigma^{(t)}(\theta) = 
    \frac{\sin \left( \sqrt{\frac{2}{n}} t \theta \right)}{\sin \left( \sqrt{\frac{2}{n}}  \theta \right)},
  \end{equation}
  for this $\theta$, and $t \in [0,1]$. As discussed in the proof of Theorem \ref{thm: mainEnt}, the value $\theta$ of interest is always strictly less than $\sqrt{n/2} \pi$. The equality characterisation in Theorem \ref{thm: mainEnt} follows from the equality characterisation for $\sigma^{(t)} (\theta) = t$.
\end{rem}
The coupling we use is not the optimal coupling, but nonetheless arises from optimal transport. Let $U, V$ be $\R^n$-valued random vectors satisfying $\E \vert U \vert^{2}, \E \vert V \vert^{2} < \infty$, such that their distributions have density with respect to the Lebesgue measure. Then, a theorem of Brenier (see \cite[Theorem 2.12 (ii)]{Villani03}) guarantees that a unique coupling minimises $\E \vert U - V \vert^{2}$, and furthermore, it is given by $(U , T(U))$ where $T= \grad \phi$ is the gradient of a convex function $\phi$. Note that the map $T$,  called the \emph{Brenier map} from $U$ to $V$, pushes forward the distribution of $U$ to the distribution of $V$. In the present work, we consider the Brenier map $T_{0}$ from $Z$ to $X_{0}$, the Brenier map $T_{1}$ from $Z$ to $X_{1}$, and work with the joint distribution $(X_{0},X_{1}) = (T_{0}(Z) , T_{1}(Z))$.

The contraction theorem of Caffarelli \cite{Caffarelli00} tells us that the Brenier map from the standard Gaussian to any strongly log-concave random vector is $1$-Lipschitz. This automatically gives us that the $X_{t} = (1-t) X_{0} + t X_{1}$ we consider in this paper is a $1$-Lipschitz image of $Z$ under $T_{t} = (1-t)T_{0} + t T_{1}$. Given that $Z$ satisfies a Poincar\'e inequality with constant $1$, a standard change of variables argument immediately shows that $X_{t}$ satisfies the Poincar\'e inequality with constant $1$ for all functions. However, interestingly, we do not use this fact directly. Instead, we use the $1$-Lipschitz property of $T_{t}$ and the Poincar\'e constant of $Z$ separately. It remains an open question whether the optimal coupling also satisfies the conclusion of Theorem \ref{thm: mainEnt}. 

An important feature of the interpolation $X_{t}$, if considered under optimal coupling, is that the trajectories $\{ T_{t} (x) \}_{t \in (0,1)}$ do not cross (in an almost-everywhere sense), and hence the distribution $\mu_{t}$ of $X_{t}$ can be described as the flow of $\mu_{0}$ under a time-dependent velocity field. Yet another useful property under optimal coupling is that the velocity field generated is a gradient field. Both these properties are used in \cite{AishwaryaRotem23}. 

In our case, for $X_{t}$ that we consider, we are not guaranteed the existence of a driving velocity field, nor do we see a reason for this velocity field to be a gradient field even if it exists. The former technical difficulty is overcome by a ``\emph{trajectories do not cross}'' result when $X_{0}, X_{1}$ are ``nice'' (Proposition \ref{prop: velocityexists}), and approximation. The latter issue most prominently appears in the proof of Theorem \ref{thm: mainEnt}, where an inequality such as $\E \tr [\grad v(X_{t})^{2}]  \geq \E \vert v(X_{t}) \vert^{2}$ is needed for a particular odd vector field $v$. This is always true when $v$ is a gradient field and the even random vector $X_{t}$ has Poincar\'e constant $1$, but not in general. To resolve this problem, we explicitly use the structure of the given vector field $v$ (which depends on $T_{t}$) and the Gaussian Poincar\'e inequality. This makes it unclear if our proof would go through if $T_{0}$ and $T_{1}$ were contractions (via the reverse Ornstein--Uhlenbeck process) introduced by Kim and E. Milman \cite{KimMilman12}, and not Brenier maps. Readers familiar with the work of Alesker, Gilboa, and V. Milman \cite{AleskerDarMilman99} may find it intriguing to compare the fact that the coupling used in this paper admits Theorem \ref{thm: mainEnt} (while for other aforementioned couplings such a result is yet unestablished), with Gromov's observation that $\grad \phi [\R^n] + \grad \psi [\R^n] = (\grad \phi + \grad \psi )[\R^n]$ when $\phi, \psi$ are $C^{2}$ convex functions with strictly positive Hessian \cite[1.3.A.]{Gromov90} (see also, \cite[Proposition 2.2]{AleskerDarMilman99}).

As an immediate corollary to Theorem \ref{thm: mainEnt}, using the variational principle \eqref{eq: measuremaxent}, we obtain a new proof of Eskenazis and Moschidis' result. 
\begin{cor} \label{cor: EMBM}
    The dimensional Brunn--Minkowski inequality for the Gaussian measure \eqref{eq: GaussiandimBM} holds if $K_{0}$ and $K_{1}$ are origin-symmetric convex bodies.
\end{cor}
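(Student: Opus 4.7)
The plan is to combine Theorem \ref{thm: mainEnt} with the variational principle \eqref{eq: measuremaxent}. Let $K_{0}, K_{1} \subseteq \R^n$ be origin-symmetric convex bodies and let $X_{0}, X_{1}$ be $\R^n$-valued random vectors with distributions $\gamma_{K_{0}}$ and $\gamma_{K_{1}}$ respectively. Since each $K_{i}$ is convex, the density $\d \gamma_{K_{i}} / \d \gamma = \ind_{K_{i}} / \gamma(K_{i})$ is log-concave (writing $\ind_{K_{i}} = e^{-V_{i}}$ with $V_{i}$ the convex function equal to $0$ on $K_{i}$ and $+\infty$ off $K_{i}$), so $\gamma_{K_{i}}$ is strongly log-concave. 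Origin-symmetry of $K_{i}$ makes this density even. Hence Theorem \ref{thm: mainEnt} furnishes a coupling of $X_{0}$ and $X_{1}$ satisfying \eqref{eq: dimBMentZ}.

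Since each $X_{i}$ takes values in $K_{i}$ almost surely, the convex combination $(1-t) X_{0} + t X_{1}$ is supported in $(1-t) K_{0} + t K_{1}$. Letting $\mu_{t}$ denote its distribution, we have $\mu_{t} \in \mathcal{P}((1-t)K_{0} + t K_{1})$, and the variational principle \eqref{eq: measuremaxent} yields
\begin{equation}
\gamma \bigl( (1-t) K_{0} + t K_{1} \bigr) \geq e^{-D(\mu_{t} \Vert \gamma)} = e^{-D((1-t)X_{0} + t X_{1} \Vert Z)}.
\end{equation}
At the same time, the supremum in \eqref{eq: measuremaxent} is attained at the normalised restriction, so $e^{-D(X_{i} \Vert Z)} = \gamma(K_{i})$ exactly for $i=0,1$.

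Raising the preceding inequality to the power $\frac{1}{n}$ and chaining it with the entropy inequality \eqref{eq: dimBMentZ} produces
\begin{equation}
\gamma \bigl( (1-t) K_{0} + t K_{1} \bigr)^{\frac{1}{n}} \geq e^{-\frac{1}{n} D((1-t) X_{0} + t X_{1} \Vert Z)} \geq (1-t) \gamma(K_{0})^{\frac{1}{n}} + t \gamma(K_{1})^{\frac{1}{n}},
\end{equation}
which is precisely \eqref{eq: GaussiandimBM}. There is no genuine obstacle: once the random vectors $X_{i} \sim \gamma_{K_{i}}$ are chosen, the corollary reduces to verifying the evenness and strong log-concavity of the normalised restrictions $\gamma_{K_{i}}$, and observing that $(1-t) X_{0} + t X_{1}$ is automatically supported in the Minkowski average $(1-t)K_{0} + t K_{1}$ so that the variational bound applies.
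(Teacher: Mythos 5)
Your proof is correct and follows exactly the route the paper sketches: take $X_i \sim \gamma_{K_i}$, verify these are even and strongly log-concave so Theorem \ref{thm: mainEnt} applies, note $(1-t)X_0 + tX_1$ is supported in $(1-t)K_0 + tK_1$, and then chain the variational principle \eqref{eq: measuremaxent} (with equality for the normalised restrictions) against \eqref{eq: dimBMentZ}. The paper states the corollary as immediate and relies on precisely the reasoning laid out just before Question \ref{qstn: poincare}; you have simply written out those details.
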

\begin{rem}
In view of Remark \ref{rem: curvaturemain},  it is an interesting question if one can meaningfully bound $ \E \vert X_{0} - X_{1} \vert^{2}$ from below, when $X_{0}, X_{1}$ have distributions $\gamma_{K_{0}}, \gamma_{K_{1}}$, respectively, for symmetric convex bodies $K_{0}, K_{1}$. This could potentially lead to a Gaussian dimensional Brunn--Minkowski inequality for symmetric convex bodies which also incorporates the curvature aspects of the Gaussian measure. As far as we know, this has not been done.
\end{rem}
A fundamental problem in this area concerns obtaining functional forms of geometric inequalities. This means that, given a geometric inequality, one wants to find a functional inequality which recovers the given geometric inequality when applied to functions canonically associated with the involved sets (for example, to indicator functions). For a prototypical example, consider the Brunn--Minkowski inequality in its geometric-mean form that all log-concave measures $\nu$ are known to satisfy:
\begin{equation} \label{eq: lcbm}
    \nu ((1-t) K_{0} + t K_{1}) \geq \nu (K_{0})^{1-t} \nu (K_{1})^{t},
\end{equation}
whenever $K_{0}$ and $K_{1}$ are compact sets in $\R^n$. The functional form of \eqref{eq: lcbm} is the \emph{Pr\'ekopa--Leindler inequality} which concludes 
\begin{equation} \label{eq: lcpl}
    \int h \d \nu \geq \left( \int f \d \nu \right)^{1-t} \left( \int g \d \nu \right)^{t}, 
\end{equation}
whenever $f,g,h$ are non-negative functions satisfying
\begin{equation}
    h((1-t)x + t y ) \geq f(x)^{1-t}  g (y)^{t}, 
\end{equation}
for all $x,y$. Of course, if $f$ and $g$ are indicator functions of $K_{0}$ and $K_{1}$, respectively, then the indicator of $(1-t) K_{0} + t K_{1}$ is an admissible choice for $h$, thereby producing \eqref{eq: lcbm}. While several proofs of the Pr\'ekopa--Leindler inequality exist (for example, see \cite{Gardner02}), an elegant proof can be obtained from the entropy form of \eqref{eq: lcbm}: every pair of $\R^n$-valued random vectors $X_{0}, X_{1}$ with density (with respect to the Lebesgue measure) have a joint distribution $(X_{0} , X_{1})$ such that
\begin{equation} \label{eq: lcdispcon}
    D((1-t)X_{0} + t X_{1} \Vert Y) \leq (1-t) D(X_{0} \Vert Y) + t D( X_{1} \Vert Y),
\end{equation}
where $Y$ is a random vector with distribution $\nu$. The fact that the optimal coupling $(X_{0}, X_{1})$ satisfies \eqref{eq: lcdispcon} (see \cite[Theorem 9.4.11]{AmbrosioGigliSavare08}) is well known, and often recorded as the ``\emph{displacement convexity of entropy} on the metric measure space $(\R^n , \vert \cdot \vert , \nu)$''. To go from \eqref{eq: lcdispcon} to \eqref{eq: lcpl} one can use the Donsker--Varadhan duality formula \cite[Section 2]{DOnskerVaradhan83} describing the Legendre transform of relative entropy. It says, for $\nu$-integrable functions $\phi$, we have
\begin{equation} \label{eq: donskervaradhan}
    \log \int e^{\phi} \d \nu = \sup_{\mu \ll \nu } \left[ \int \phi \d \mu - D(\mu \Vert \nu)  \right],
\end{equation}
where the supremum on the right is over all probability measures $\mu$ absolutely continuous with respect to $\nu$, and equality is attained in \eqref{eq: donskervaradhan} for $\d \mu \propto e^{\phi} \d \nu$. 

We do not spell out the details of the implication \eqref{eq: lcdispcon} $\Rightarrow$ \eqref{eq: lcpl} because the reader may infer the general idea from our proof of Theorem \ref{thm: mainbbl}, which is rather short. Nonetheless, it is apt to remark here that this technique stands out because it entirely operates at the level of integrals and does not appeal to local estimates on the integrands (other than the one granted by assumption), thereby making it possible to extract functional inequalities even if a convexity property of entropy is only available on a restricted class of measures. The same cannot be said about some other transport-based proofs of the Pr\'ekopa--Leindler inequality (or its generalisations). Besides, this method works in measure spaces without any smooth structure. For example, the reader may find beautiful applications to discrete structures in works of Gozlan, Roberto, Samson, and Tetali \cite{GozlanRobertoSamsonTetali21}, and Slomka \cite{Slomka24}.

We will use this duality to obtain a functional form of the dimensional Brunn--Minkowski inequality \eqref{eq: GaussiandimBM}, which is our second main result. 
\begin{notation}
    Let $x,y \geq 0$. We write 
   \begin{equation}
   M_p^t (x , y ) \coloneqq \begin{cases}
        \left( (1-t) x^{p} + t y^{p} \right)^{\frac 1 p}, &\hbox{ for } xy > 0,
            \\
            0 &\hbox{ otherwise,}
    \end{cases}
 \end{equation}
for $t \in [0,1]$ and $p \in (- \infty , 0) \cup (0 , \infty)$. This is extended to $p \in \{ - \infty , 0 , \infty \}$ by taking limits. Thus, in particular, $M_{-\infty}^{t} (x , y ) = \min \{ x , y \}, M_{0}^{t} (x , y ) = x^{1-t} y^{t}, M_{\infty}^{t} (x , y ) = \max \{ x , y \},$ if $xy > 0$.
\end{notation}
\begin{thm} \label{thm: mainbbl}
    Let $p \geq 0$, and suppose $f, g, h$ are non-negative functions on $\R^n$ with $f,g$ even log-concave and $\gamma$-integrable, such that 
    \begin{equation}
        h((1-t)x_{0} + t x_{1}) \geq M_{p}^{t} \left( f(x_{0}) , g(x_{1}) \right).
    \end{equation}
    Then, we have 
    \begin{equation}
         \int h \d \gamma  \geq M_{\frac{p}{1 + np}}^{t} \left( \int f \d \gamma, \int g \d \gamma \right). 
    \end{equation}
\end{thm}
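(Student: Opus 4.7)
The plan is to mirror the entropy-based derivation of Pr\'ekopa--Leindler sketched in the excerpt, but with displacement convexity of entropy replaced by the stronger dimensional inequality of Theorem \ref{thm: mainEnt}. The first step is to normalize by setting $a := \int f \, \d\gamma > 0$ and $b := \int g \, \d\gamma > 0$ (the degenerate cases being immediate) and letting $\mu_f, \mu_g$ denote the probability measures with densities $f/a$ and $g/b$ with respect to $\gamma$. Since $f, g$ are even log-concave, $\mu_f, \mu_g$ are even strongly log-concave, and Theorem \ref{thm: mainEnt} produces a coupling $(X_0, X_1)$ with marginals $\mu_f, \mu_g$ satisfying the dimensional entropy inequality on $\mu_t := \textnormal{Law}((1-t) X_0 + t X_1)$. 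Denoting $c := \int h \, \d\gamma$, $\xi := \E[\log h(X_t)]$, $\phi := \E[\log f(X_0)]$, $\psi := \E[\log g(X_1)]$, Donsker--Varadhan duality applied to $\log h$ against $\mu_t$ gives $\log c \geq \xi - D(\mu_t \Vert \gamma)$, while its optimal-test-function version at $\log f$ and $\log g$ yields $D(\mu_f \Vert \gamma) = \phi - \log a$ and $D(\mu_g \Vert \gamma) = \psi - \log b$. Inserting these identities into Theorem \ref{thm: mainEnt} and rearranging would give
\begin{equation}
c^{1/n} \geq (1-t) a^{1/n} e^{(\xi - \phi)/n} + t b^{1/n} e^{(\xi - \psi)/n}.
\end{equation}

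Next I would bring in the pointwise hypothesis. With $\eta := g(X_1)/f(X_0)$ and $\tau := e^{p(\psi - \phi)}$, the bound $h(X_t) \geq M_p^t(f(X_0), g(X_1))$ implies $\log(h(X_t)/f(X_0)) \geq \Psi(\log \eta)$, where $\Psi(s) := \frac{1}{p}\log((1-t) + t e^{ps})$ is strictly convex in $s$ for $p > 0$ (by direct second-derivative computation). Jensen's inequality together with the analogous bound for $\log(h(X_t)/g(X_1))$ should yield $e^{(\xi - \phi)/n} \geq ((1-t) + t\tau)^{1/(np)}$ and $e^{(\xi - \psi)/n} \geq \tau^{-1/(np)}((1-t) + t\tau)^{1/(np)}$, so that
\begin{equation}
c^{1/n} \geq ((1-t) + t\tau)^{1/(np)} \bigl[(1-t) a^{1/n} + t b^{1/n} \tau^{-1/(np)}\bigr].
\end{equation}

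The last step is to raise this to the power $nq = np/(1+np)$, so that $c^q$ (with $q = p/(1+np)$) is bounded below by $((1-t) + t\tau)^{\beta} \bigl[(1-t) a^{1/n} + t b^{1/n} \tau^{-1/(np)}\bigr]^{1-\beta}$ where $\beta := 1/(1+np) = 1 - nq$. Applying the concavity of $(x, y) \mapsto x^{1-\beta} y^\beta$ on $\R_+^2$ (equivalently, H\"older's inequality) with weights $1-t, t$ to the pairs $(a^{1/n}, 1)$ and $(b^{1/n} \tau^{-1/(np)}, \tau)$ produces the lower bound $(1-t) a^{(1-\beta)/n} + t b^{(1-\beta)/n} \tau^{-(1-\beta)/(np) + \beta}$. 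The identities $(1-\beta)/n = q$ and $(1-\beta)/(np) = \beta$ — the latter being precisely the algebra that singles out $q = p/(1+np)$ — make the $\tau$-exponent vanish, leaving $c^q \geq (1-t) a^q + t b^q$. I expect the main obstacle to be cleanly organizing this final H\"older step together with the algebraic cancellation; the limiting case $p = 0$ can be handled either by continuity in $p$ or directly via AM--GM, since the hypothesis there reduces to the Pr\'ekopa--Leindler assumption $\xi \geq (1-t)\phi + t \psi$.
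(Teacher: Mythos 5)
Your proposal is correct and follows essentially the same route as the paper: combine Theorem \ref{thm: mainEnt} with the Donsker--Varadhan inequality and a Jensen/convexity step for the pointwise hypothesis, then finish with a H\"older-type interpolation of exponents. The only difference is cosmetic --- the paper packages your final exponent algebra (the $\tau$, $\beta$ cancellation) as a single application of the inequality $M_{p}^{t}(a,b)\,M_{1/n}^{t}(x,y) \geq M_{r}^{t}(ax,by)$ with $\tfrac1r = \tfrac1p + n$, and your two Jensen bounds are equivalent to its single appeal to the joint convexity of $(u,v)\mapsto \log((1-t)e^{u}+te^{v})$.
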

Indeed, when $f$ and $g$ are indicators of symmetric convex bodies, and $p= \infty$, one recovers \eqref{eq: GaussiandimBM}. Inequalities such as in the theorem above are sometimes called \emph{Borell--Brascamp--Lieb inequalities} after the works by Borell \cite{Borell75}, and Brascamp--Lieb \cite{BrascampLieb76BBL}.

To the best of our knowledge, the argument for obtaining Theorem \ref{thm: mainbbl} from Theorem \ref{thm: mainEnt}, though simple, is new. Previously, it was not clear how to apply duality \eqref{eq: donskervaradhan} to inequalities such as \eqref{eq: dimBMentZ} that are not linear in relative entropy. Exactly the same idea as we use in the proof of Theorem \ref{thm: mainbbl} gives further dimension-dependent functional inequalities for functions that are not necessarily log-concave, as discussed below. 

Recall that a convex function $V: \R^n \to \R$ is said to be $\beta$-homogeneous if $V( \lambda x) = \lambda^{\beta} V(x)$, for every $x \in \R^n$ and $\lambda > 0$. Consider the probability measure $\nu \propto e^{-V} \d x$, such that $V$ is $\beta$-homogeneous for some $\beta \in (1, \infty)$. Say $\nu$ is represented by a random vector $Y$. Then, \cite[Theorem 1.4]{AishwaryaRotem23} states that, for random vectors $X_{0}$ and $X_{1}$ having radially decreasing density with respect to $\nu$, there exists a coupling $(X_{0}, X_{1})$ (in fact, the optimal coupling works) such that 
\begin{equation} \label{eq: convexentother}
    e^{- \frac{\beta - 1}{\beta n} D((1-t)X_{0} + t X_{1} \Vert Y)} \geq (1-t)     e^{- \frac{\beta - 1}{\beta n} D(X_{0} \Vert Y)} + t     e^{- \frac{\beta - 1}{\beta n} D(X_{1} \Vert Y)}.
\end{equation}
From this, we have the following result.
\begin{thm} \label{thm: functionalformold}
    Consider the probability measure $ \d \nu \propto e^{-V} \d x$, such that $V$ is $\beta$-homogeneous for some $\beta \in (1, \infty)$. Let $p \geq 0$, and suppose $f,g,h$ are non-negative functions on $\R^n$ with $f,g$ radially decreasing and $\nu$-integrable, such that
     \begin{equation}
        h((1-t)x_{0} + t x_{1}) \geq M_{p}^{t} \left( f(x_{0}) , g(x_{1}) \right).
    \end{equation}
    Then, we have 
    \begin{equation}
         \int h \d \nu  \geq M_{\frac{(\beta - 1)p }{(\beta - 1) + \beta n p}}^{t} \left( \int f \d \nu, \int g \d \nu \right). 
    \end{equation}
\end{thm}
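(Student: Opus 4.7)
The plan is to adapt the proof of Theorem \ref{thm: mainbbl} verbatim, substituting the entropy inequality \eqref{eq: convexentother} for Theorem \ref{thm: mainEnt}: it is combined with Donsker--Varadhan duality \eqref{eq: donskervaradhan} to convert a power-mean estimate on $h$ into one on $\int h \d \nu$.

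Set $A \coloneqq \int f \d \nu$ and $B \coloneqq \int g \d \nu$, and let $\mu_0 \coloneqq (f/A)\nu$ and $\mu_1 \coloneqq (g/B)\nu$. Both are probability measures with radially decreasing density with respect to $\nu$, so \eqref{eq: convexentother} yields a coupling $(X_0, X_1)$ with these marginals such that, writing $\pi_t$ for the law of $X_t = (1-t)X_0 + tX_1$,
\[
e^{-\alpha D(\pi_t \Vert \nu)} \geq (1-t) e^{-\alpha D(\mu_0 \Vert \nu)} + t e^{-\alpha D(\mu_1 \Vert \nu)}, \qquad \alpha \coloneqq \frac{\beta-1}{\beta n}.
\]
A direct computation gives $D(\mu_0 \Vert \nu) = \E[\log f(X_0)] - \log A$, and similarly for $\mu_1$, so taking logarithms of the entropy inequality produces
\[
-D(\pi_t \Vert \nu) \geq \tfrac{1}{\alpha} \log \bigl[ (1-t) A^\alpha e^{-\alpha \E \log f(X_0)} + t B^\alpha e^{-\alpha \E \log g(X_1)} \bigr].
\]

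Next, applying Donsker--Varadhan \eqref{eq: donskervaradhan} with $\phi = \log h$ and $\mu = \pi_t$ gives $\log \int h \d \nu \geq \E[\log h(X_t)] - D(\pi_t \Vert \nu)$. The pointwise hypothesis $h(X_t) \geq M_p^t(f(X_0), g(X_1))$ together with Jensen's inequality for the function $(a, b) \mapsto \log M_p^t(e^a, e^b) = \frac{1}{p}\log[(1-t)e^{pa} + te^{pb}]$, which a short Hessian calculation confirms is convex for $p \geq 0$, yields $\E \log h(X_t) \geq \log M_p^t(\bar F, \bar G)$, where $\bar F \coloneqq \exp(\E \log f(X_0))$ and $\bar G \coloneqq \exp(\E \log g(X_1))$. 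Combining with the previous display reduces the problem to
\[
M_p^t(\bar F, \bar G) \cdot M_\alpha^t(A/\bar F, B/\bar G) \geq M_q^t(A, B).
\]

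This is precisely the generalised Hölder inequality for weighted power means, $M_p^t(u_0, u_1) \cdot M_\alpha^t(v_0, v_1) \geq M_q^t(u_0 v_0, u_1 v_1)$ whenever $1/q = 1/p + 1/\alpha$, applied with $u = (\bar F, \bar G)$ and $v = (A/\bar F, B/\bar G)$; the relation $1/q = 1/p + \beta n / (\beta - 1)$ then identifies the exponent $q = (\beta - 1) p / ((\beta - 1) + \beta n p)$ as claimed. I expect no serious obstacles: the three transforms (entropy inequality, Jensen, power-mean Hölder) interlock algebraically to deliver exactly this exponent, and the endpoints $p \in \{0, \infty\}$ reduce to the standard cases $M_0^t(a, b) = a^{1-t} b^t$ and $M_\infty^t(a, b) = \max(a, b)$.
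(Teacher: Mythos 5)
Your proof is correct and matches the approach the paper itself indicates: the paper omits the proof of Theorem \ref{thm: functionalformold}, noting only that it ``follows along the same lines as Theorem \ref{thm: mainbbl},'' and your verbatim adaptation — substituting the entropy inequality \eqref{eq: convexentother} for \eqref{eq: dimBMentZ}, which replaces the H\"older exponent $q = 1/n$ by $q = (\beta-1)/(\beta n)$ and hence yields $r = (\beta-1)p/((\beta-1)+\beta np)$ — is exactly that argument. The intermediate steps (the identity $D(\mu_0\Vert\nu) = \E[\log f(X_0)] - \log A$, Jensen via joint convexity of the log-sum-exp, and the weighted power-mean H\"older inequality) interlock correctly to produce the claimed exponent.
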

The standard Gaussian measure falls under the regime $\beta=2$. Thus, Theorem \ref{thm: functionalformold} can be applied to a larger class of functions compared to Theorem \ref{thm: mainbbl}, but at the same time draws a weaker conclusion.

\subsection{Related work}
In a previous version of our paper, we had the privilege of reporting two independent works on functional forms of the Eskenazis--Moschidis inequality that were not in the public domain at the time. Both works are now available online. The work by Dario Cordero-Erausquin and Alexandros Eskenazis \cite{Cordero-ErausquinEskenazis25} is based on the so-called $L^{2}$-method. Beautifully aligning with the viewpoint of classical convexity, \cite[Theorem 1]{Cordero-ErausquinEskenazis25} presents a concavity principle which is equivalent to our Theorem \ref{thm: mainbbl} under greater restrictions on the involved functions but applies to a wider class of reference measures (as in \cite{Cordero-ErasquinRotem23}). On the other hand, the work of Andreas Malliaris, James Melbourne, Cyril Roberto, and Michael Roysdon \cite{MalliarisMelbourneRobertoRoysdon25} contains an elegant measure-theoretic technique that produces very general functional inequalities directly from geometric inequalities having a certain form. For example, starting with the Eskenazis--Moschidis inequality \eqref{eq: GaussiandimBM}, \cite[Theorem 1.1]{MalliarisMelbourneRobertoRoysdon25} significantly generalises Theorem \ref{thm: mainbbl} to admit all even unimodal functions and parameters $p \geq -1/n$. Likewise, a generalisation of Theorem \ref{thm: functionalformold} can be found in \cite[Theorem 3.2]{MalliarisMelbourneRobertoRoysdon25}.

\subsection{Acknowledgements}
We are grateful to Liran Rotem for his continued and generous sharing of insights on the topic of dimensional Brunn--Minkowski inequalities, and to Alexandros Eskenazis, for sharing his joint results as well as his comments (in particular, but not limited to, his suggestion to discuss equality cases in Theorem \ref{thm: mainEnt}). Many thanks to James Melbourne for kindly sharing his work, and to Alexander Volberg for enriching discussions on a related problem. We sincerely acknowledge the helpful discussions with Galyna Livshyts and Emma Pollard on potential functional forms of the Eskenazis--Moschidis inequality. We also thank the anonymous reviewer for their valuable comments. 

\subsection{Organisation of the paper}
The proof of Theorem \ref{thm: mainEnt} is based on an Eulerian description of mass transport. The required background is presented in Section \ref{sec: prelim}. Proofs of Theorem \ref{thm: mainEnt} and Theorem \ref{thm: mainbbl} appear in Section \ref{sec: mainproofs}. Theorem \ref{thm: functionalformold} follows along the same lines as Theorem \ref{thm: mainbbl}, hence we omit its proof. 
\subsection*{Declarations of interest} 
On behalf of all authors, the corresponding author states that there is no conflict of interest.
\subsection*{Data availability statement}
Our manuscript has no associated data.

\section{Preliminaries} \label{sec: prelim}
First, we note that $\frac{\d^{2}}{\d t^{2}} e^{- \frac{1}{n} D(X_{t} \Vert Z)} \leq 0$ is equivalent to $\frac{\d^{2}}{ \d t^{2}} D(X_{t} \Vert Z) \geq \frac{1}{n} \left( \frac{\d}{ \d t} D(X_{t} \Vert Z) \right)^{2}$, whenever the relevant quantities have the required regularity. Thus, we would like to compute $\frac{\d^{2}}{ \d t^{2}} D(X_{t} \Vert Z)$ and $\frac{\d}{ \d t} D(X_{t} \Vert Z)$. Such local computations are often best performed in the language of velocity fields.

Suppose a curve $\{ \mu_{t} \}_{t \in [0,1]}$ of probability measures on $\R^n$ is given. A time-dependent velocity field $v_{t}$ is said to be \emph{compatible} with $\{ \mu_{t} \}_{t \in [0,1]}$ if 
\begin{equation} \label{eq: conteq}
\partial_{t} \mu_{t} + \div (v_{t} \mu_{t}) = 0
\end{equation}
is satisfied in the weak sense, where $\div$ denotes divergence. The latter equation means that 
\begin{equation} \label{eq: conteqweak}
 \frac{\d }{\d t} \int f \d \mu_{t} = \int \langle \grad f , v_{t} \rangle \d \mu_{t},
\end{equation}
for all compactly supported smooth functions $f$. Once Equation \eqref{eq: conteq} is known to hold, Equation \eqref{eq: conteqweak} holds under wider generality; for example, it holds for all bounded Lipschitz smooth functions (see \cite[Chapter 8]{AmbrosioGigliSavare08}). Moreover, under additional regularity assumptions on $v_{t}$, the class of functions $f$ for which \eqref{eq: conteqweak} holds can be expanded further. For instance, in the setup of \cite[Example 2.1]{AishwaryaLi25KP}, the elementary calculation therein shows that \eqref{eq: conteqweak} holds for all bounded Lipschitz functions. 

Ignoring all regularity issues, we compute the derivatives of $D(\mu_{t} \Vert \gamma)$ twice when a compatible velocity field is given. We write the result for a general log-concave measure $\nu$ since it may be of independent interest.
\begin{prop} \label{prop: derivativesofentropy}
    Let $\d \nu = e^{-W} \d x$, for smooth convex $W$. Consider a curve of probability measures $\{ \mu_{t} \}_{t \in [0,1]}$ with a compatible velocity field $v_{t}$. Suppose $v_{t}$ is sufficiently smooth, then 
    \begin{equation}
\begin{split}
    \frac{\d}{\d t} D(\mu_{t} \Vert \nu) = - \int \div^{W} (v_{t}) \d \mu_{t}, \\
\end{split}
    \end{equation}
    where $\div^{W}(v) = \div (v) - \langle \grad W , v \rangle$, for vector fields $v$.
    Moreover, if the trajectories of $v_{t}$ take each particle along a straight line with constant speed, then 
    \begin{equation}
            \frac{\d^{2}}{\d t^{2}}  D(\mu_{t} \Vert \nu) = \int \mathcal{G}^{W} (v_{t}) \d \mu_{t}, \\
    \end{equation}
    where $\mathcal{G}^{W} (v) = \tr (\grad v)^{2} + \langle \grad^{2} W \cdot v , v \rangle$, for vector fields $v$.
\end{prop}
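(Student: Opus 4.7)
The plan is to decompose the relative entropy as
\[
D(\mu_t \Vert \nu) = \int \rho_t \log \rho_t \,\d x + \int W \,\d \mu_t,
\]
where $\rho_t$ is the Lebesgue density of $\mu_t$, and to differentiate the two pieces separately. The $W$-term is handled in Eulerian form throughout; for the second derivative of the entropy term I would switch to Lagrangian coordinates, since the ``straight lines, constant speed'' hypothesis is most naturally expressed there.

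For the first derivative, the compatibility equation \eqref{eq: conteqweak} applied to $W$ (after the usual truncation to bounded Lipschitz functions) gives $\tfrac{\d}{\d t}\int W \,\d \mu_t = \int \langle \grad W, v_t\rangle \,\d \mu_t$. For the differential-entropy piece, differentiating under the integral sign and using $\partial_t \rho_t = -\div(v_t \rho_t)$ followed by one integration by parts yields
\[
\frac{\d}{\d t}\int \rho_t \log \rho_t \,\d x = -\int (1 + \log \rho_t)\,\div(v_t \rho_t)\,\d x = \int \langle \grad \rho_t, v_t\rangle \,\d x = -\int \div v_t \,\d \mu_t.
\]
Summing the two lines gives $\tfrac{\d}{\d t} D(\mu_t \Vert \nu) = -\int \div^W(v_t)\,\d\mu_t$; note that only the compatibility of $v_t$ was used here, not the straight-line hypothesis.

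For the second derivative, the straight-line constant-speed assumption translates to $\partial_t v_t + (\grad v_t)v_t = 0$, or equivalently in Lagrangian coordinates: writing $X_t$ for the flow map starting from $X_0 = \mathrm{id}$, one has $X_t(x) = x + tV(x)$ where $V = v_0$, and $v_t \circ X_t = V$ is $t$-independent. Using $\rho_t(X_t(x)) = \rho_0(x)/J_t(x)$ with $J_t(x) = \det(I + t\grad V(x))$, I would rewrite
\[
D(\mu_t \Vert \nu) = \int \bigl[\log \rho_0(x) - \log J_t(x) + W(X_t(x))\bigr]\,\d \mu_0(x),
\]
where the $t$-dependence of the integrand is now explicit pointwise. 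Jacobi's formula combined with $\grad v_t \circ X_t = \grad V \cdot (I + t\grad V)^{-1}$ gives $\partial_t \log J_t = \tr[\grad v_t]\circ X_t$ and $\partial_t^2 \log J_t = -\tr\bigl[(\grad v_t)^2\bigr] \circ X_t$; and $\ddot X_t \equiv 0$ together with the chain rule gives $\partial_t^2 W(X_t) = \langle \grad^2 W \cdot v_t, v_t\rangle \circ X_t$. Pushing the integrals forward by $X_t$ to rewrite them against $\mu_t$ then yields $\tfrac{\d^2}{\d t^2}D(\mu_t \Vert \nu) = \int \mathcal{G}^W(v_t)\,\d\mu_t$.

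The main technical obstacle, as the authors pre-emptively flag by the phrase ``ignoring all regularity issues'', is justifying these manipulations: $\rho_t$ must be smooth and strictly positive, $X_t$ must be a $C^1$-diffeomorphism with $\det(I + t\grad V) > 0$ for every $t \in [0,1]$, $W$ and $V$ must be smooth enough to legitimise differentiation under the integral sign, and decay at infinity must suffice to kill the boundary terms in the integration by parts. I would present the argument under blanket smoothness hypotheses and leave the approximation to the place in Section \ref{sec: mainproofs} where the proposition is actually invoked.
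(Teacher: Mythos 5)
Your argument is correct, but it takes a genuinely different route from the paper's, most visibly in the second-derivative step. For the first derivative the paper works directly with the density $\rho_t = \frac{\d\mu_t}{\d\nu}$ relative to $\nu$ — chain rule, continuity equation, and a single integration by parts against $e^{-W}\d x$ — which lands in the same place as your decomposition $D(\mu_t\Vert\nu) = \int \rho_t\log\rho_t\,\d x + \int W\,\d\mu_t$ into differential entropy plus potential energy; the two are essentially rearrangements of the same computation. For the second derivative, however, the paper stays entirely Eulerian: it differentiates the first-derivative formula $-\int \div^{W}(v_t)\,\d\mu_t$ once more, uses the inviscid Burgers-type equation $\partial_t v_t + \grad_{v_t} v_t = 0$ (the Eulerian form of your straight-line, constant-speed hypothesis) together with the continuity equation, and then invokes the pointwise weighted Bochner identity $\mathcal{G}^{W}(v) = \div^{W}(\grad_v v) - \langle\grad\div^{W}(v), v\rangle$ to collect terms. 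Your Lagrangian approach — pulling back by $X_t(x) = x + tV(x)$, using $\rho_t\circ X_t = \rho_0/J_t$ together with Jacobi's formula $\partial_t\log J_t = \tr[\grad v_t\circ X_t]$ and $\partial_t^2\log J_t = -\tr[(\grad v_t\circ X_t)^2]$, plus $\ddot X_t \equiv 0$ for the $W$-term — reaches the same identity without any Bochner computation, at the cost of needing $X_t$ to be a diffeomorphism with $\det(I + t\grad V) > 0$ on $[0,1]$, which is precisely what Proposition \ref{prop: velocityexists} arranges in the setting where this result is applied. Both routes are standard in the Otto-calculus literature; the paper's Eulerian version never inverts the flow and thus packages the regularity burden slightly differently, while your Lagrangian version makes the use of the straight-line hypothesis completely transparent (it enters only through $\ddot X_t = 0$ and the rational-in-$t$ structure of $J_t$).
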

\begin{rem} \label{rem: whatisstraightline}
    Consider the one-parameter family of maps $S_{t}$ which solves $\frac{\d}{\d t} S_{t} (x) = v_{t} (S_{t}(x))$, whose existence is guaranteed by the smoothness assumptions on $v_{t}$. Then, by ``\textit{the trajectories of $v_{t}$ take each particle along a straight line with constant speed}'' we mean that the curve $t \mapsto S_{t}(x)$, for each $x$, is a straight line with constant speed. Thus, $\frac{\d^{2}}{\d t^{2}} S_{t} (x) = 0$, and consequently, $\partial_{t} v_{t} + \grad_{v_{t}} v_{t} = 0$.
\end{rem}
\begin{proof}
    Let $ \rho_{t}$ denote the density of $\mu_{t}$ with respect to $\nu$. Then,
    \begin{equation} \label{eq: firstderivativeentropycomp}
\begin{split}
\frac{\d}{\d t} D(\mu_{t} \Vert \nu) & = \frac{\d}{\d t} \int  \log \rho_{t} \d \mu_{t} = \int \partial_{t}  \log \rho_{t} \d \mu_{t} + \int \langle \grad \log \rho_{t} ,  v_{t} \rangle \d \mu_{t} \\
&=  \frac{\d}{\d t}\int  \rho_{t} \d \nu + \int \langle \grad \log \rho_{t} , v_{t} \rangle \rho_{t} \d \nu =  \int \langle \grad \rho_{t} , v_{t} \rangle \d \nu \\
&= \int \langle \grad \rho_{t} , v_{t} \rangle e^{-W} \d x = - \int \rho_{t} \div (e^{-W}v_{t}) \d x \\
&= - \int \rho_{t} \, \div^{W} (v_{t}) \d \nu = - \int\div^{W} (v_{t}) \d \mu_{t}. 
\end{split}
\end{equation}
In the above computation, the second equality uses the chain rule and the continuity equation \eqref{eq: conteq}, and the sixth equality is an application of integration by parts.

Further, as noted in Remark \ref{rem: whatisstraightline}, we have $\partial_{t}v_{t} + \grad_{v_{t}} v_{t} = 0$ if the trajectories of $v_{t}$ take particles along a straight line with constant speed. This allows the following computation to proceed.   
\begin{equation}
\begin{split}
\frac{\d^{2}}{\d t ^{2}}  D(\mu_{t} \Vert \nu) &= -  \frac{\d }{\d t} \int  \div^{W} (v_{t}) \d \mu_{t} = - \int  \div^{W} \left( \del_{t} v_{t} \right) \d \mu_{t} - \int \langle \grad \div^{W} (v_{t}) , v_{t} \rangle  \d \mu_{t}  \\
&= \int  \div^{W} \left( \grad_{v_{t}} v_{t} \right)  \d \mu_{t} - \int \langle \grad \div^{W} (v_{t}) , v_{t} \rangle  \d \mu_{t} = \int \mathcal{G}^{W}(v_{t})  \d \mu_{t},
\end{split}
\end{equation}
where in the second equality we use the chain rule and the continuity equation \eqref{eq: conteq}, while the last equality uses the pointwise formula
\begin{equation} \label{eq: weightedbochner}
\mathcal{G}^{W} (v) =  \div^{W} \left( \grad_{v} v \right) -   \langle \grad \div^{W} (v) , v \rangle, 
\end{equation}
which holds for smooth $v$. Formula \eqref{eq: weightedbochner} is an easily obtainable weighted-version of the Bochner formula for vector fields (see, for example, \cite[Equation 14.26]{Villani09}).
\end{proof}

To utilise the above formulas for the derivatives of entropy, we need to establish the existence of compatible velocity fields in the cases of interest. We let $I_{n \times n}$ denote the $n \times n$ identity matrix.
\begin{prop} \label{prop: velocityexists}
Fix a probability measure $\d \nu = e^{-W} \d x$, and maps $T_{0} = \grad \phi_{0} , T_{1} = \grad \phi_{1} : \R^n \to \R^n$, where $\phi_{0}, \phi_{1}$ are convex functions such that $\vert \grad \phi_{1} - \grad \phi_{0} \vert \in L^{1}(\nu)$. Denote by $\mu_{t} = {T_{t}}_{\#} \nu$, where $T_{t} = (1-t) T_{0} + t T_{1}$. Suppose $\grad^{2} \phi_{0}$ and $\grad^{2} \phi_{1}$ are both lower-bounded (in the positive semidefinite order) by $\lambda I_{n \times n}$ for some $\lambda > 0$. Then, the equation 
\begin{equation}
    v_{t} (T_{t}(x)) = \frac{\d}{\d t}T_{t} (x)
\end{equation}
defines a velocity field compatible with the curve $\{ \mu_{t} \}_{t \in [0,1]}$.
\end{prop}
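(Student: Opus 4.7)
The plan is to verify the weak continuity equation $\frac{\d}{\d t}\int f \d \mu_{t} = \int \langle \grad f, v_{t} \rangle \d \mu_{t}$ for every compactly supported smooth test function $f$ by differentiating the pushforward formula $\int f \d \mu_{t} = \int f \circ T_{t} \d \nu$ in $t$ under the integral sign. The role of the hypothesis $\grad^{2}\phi_{0}, \grad^{2}\phi_{1} \geq \lambda I_{n \times n}$ is to force $T_{t}$ to be a bijection of $\R^{n}$ with uniformly controlled inverse, which both legitimises the definition of $v_{t}$ and produces the $L^{1}$-domination needed for interchanging derivative and integral.

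First I would observe that $T_{t} = \grad \Phi_{t}$ with $\Phi_{t} := (1-t)\phi_{0} + t \phi_{1}$ satisfies $\grad^{2}\Phi_{t} \geq \lambda I_{n \times n}$ for all $t \in [0,1]$, so $\Phi_{t}$ is $\lambda$-strongly convex. By standard convex analysis, its Legendre transform $\Phi_{t}^{\ast}$ is $C^{1}$ and $\grad \Phi_{t}^{\ast} = T_{t}^{-1}$ is a globally $\lambda^{-1}$-Lipschitz bijection of $\R^{n}$. Hence the formula $v_{t}(y) := (T_{1} - T_{0}) \circ T_{t}^{-1}(y)$ defines a continuous vector field on $\R^{n}$ satisfying $v_{t}(T_{t}(x)) = \frac{\d}{\d t}T_{t}(x) = T_{1}(x) - T_{0}(x)$, as required.

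Next I would establish a uniform bound on preimages: for any compact $K \subset \R^{n}$ there exists $R > 0$ such that $T_{t}^{-1}(K) \subseteq \{|x| \leq R\}$ for every $t \in [0,1]$. Indeed, strong monotonicity of $T_{t}$ gives $|T_{t}(x) - T_{t}(0)| \geq \lambda |x|$, while $|T_{t}(0)| \leq |T_{0}(0)| + |T_{1}(0)|$ is bounded independently of $t$, so $R := (\sup_{y \in K}|y| + |T_{0}(0)| + |T_{1}(0)|)/\lambda$ works. On the compact set $\{|x| \leq R\}$ the continuous map $T_{1} - T_{0}$ is bounded by some constant $M$.

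With these ingredients, for $f \in C_{c}^{\infty}(\R^{n})$ the $t$-derivative of the integrand in $\int f(T_{t}(x)) \d \nu(x)$ equals $\langle \grad f(T_{t}(x)), T_{1}(x) - T_{0}(x) \rangle$. Taking $K = \support f$ and the corresponding $R$, this derivative vanishes for $|x| > R$ uniformly in $t$ and is pointwise bounded on $\{|x| \leq R\}$ by $\|\grad f\|_{\infty}\cdot M$, which is $\nu$-integrable. Dominated convergence then allows the interchange
\begin{equation}
    \frac{\d}{\d t} \int f \d \mu_{t} = \int \langle \grad f(T_{t}(x)), v_{t}(T_{t}(x)) \rangle \d \nu(x) = \int \langle \grad f, v_{t} \rangle \d \mu_{t},
\end{equation}
where the last equality is a change of variable back under $T_{t}$. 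This is precisely \eqref{eq: conteqweak}, so $v_{t}$ is compatible with $\{\mu_{t}\}$. The main obstacle I anticipate is exactly this justification of differentiation under the integral: without a uniform-in-$t$ bound on $T_{t}^{-1}(\support f)$, these preimages could drift to infinity and destroy $L^{1}$-domination, which is why the quantitative Hessian bound $\grad^{2}\phi_{i} \geq \lambda I_{n \times n}$, rather than mere strict convexity, is the crucial hypothesis.
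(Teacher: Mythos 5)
Your proof is correct and follows essentially the same route as the paper's: establish injectivity of $T_{t}$ via the strong monotonicity inherited from $\grad^{2}\phi_{i} \geq \lambda I_{n\times n}$ (your Legendre-transform framing and the paper's ``trajectories do not cross'' observation are the same fact), then verify the weak continuity equation by differentiating the pushforward formula $\int f\,\d\mu_{t} = \int f\circ T_{t}\,\d\nu$. You supply more detail than the paper on the interchange of derivative and integral --- the uniform preimage bound $T_{t}^{-1}(\support f) \subseteq \{|x|\leq R\}$ and the resulting domination --- which the paper's one-line chain-rule computation leaves implicit; this is a useful strengthening of the exposition but not a different argument.
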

\begin{proof}
    Evidently, the only obstruction in defining a velocity field is that two trajectories $T_{t}(x)$ and $T_{t}(y)$ cross each other at some time $t \in (0,1)$, that is, if there is a $t_{\star}$ such that $T_{t_{\star}} (x) = T_{t_{\star}} (y)$ for $x \neq y$. However, 
    \begin{equation}
        \begin{split}
            \langle T_{t} (x) - T_{t} (y) , x-y \rangle &= \langle (1-t) \left( T_{0}(x) - T_{0}(y) \right) +  t \left( T_{1}(x) - T_{1}(y)  \right) , x- y \rangle \\
            &= (1-t) \langle   T_{0}(x) - T_{0}(y)  , x- y \rangle + t \langle   T_{1}(x) - T_{1}(y)  , x- y \rangle \\
            & \geq (1-t)\lambda \vert x - y \vert^{2} + t \lambda \vert x - y \vert^{2} = \lambda \vert x - y \vert^{2},
        \end{split}
    \end{equation}
    because $\phi_{0} - \frac{\lambda}{2}\vert x \vert^{2}$ and $\phi_{1} - \frac{\lambda}{2}\vert x \vert^{2}$ are convex and consequently have monotone gradients. Thus, the possibility of this obstruction is ruled out. Now we verify the compatibility, where the $\nu$-integrability of $\vert \grad \phi_{1} - \grad \phi_{0} \vert$ is used. For a compactly supported smooth function $f$, essentially the same calculation and reasoning as in \cite[Example 2.1]{AishwaryaLi25KP} can be repeated to get
    \begin{equation}
    \begin{split}
        \frac{\d}{\d t} \int f \d \mu_{t} &= \frac{\d}{\d t} \int f (T_{t}(x)) \d \nu (x)  = \int \langle \grad f (T_{t} (x)) , \frac{\d}{\d t}T_{t} (x) \rangle \d \nu  (x) \\
        &= \int \langle \grad f (T_{t}(x)) , v_{t} (T_{t} (x)) \rangle \d \nu = \int \langle \grad f , v_{t} \rangle \d \mu_{t}. \qedhere
        \end{split}
    \end{equation}
\end{proof}
\begin{rem}
    As mentioned previously, in this case, \eqref{eq: conteqweak} is valid for bounded Lipschitz functions $f$. The integrability assumption is very mild, for example, it is satisfied when $\mu_{0}$ and $\mu_{1}$ have finite second moments. 
\end{rem}
\section{Proof of the main results} \label{sec: mainproofs}
In this section, we solely work with the Gaussian measure as the reference measure. Thus, $\nu$ from the previous section is taken to be $\gamma$. In this case, we will denote $\div^{W}$ by $\divtil$ and $\mathcal{G}^{W}$ by $\gtil$. We will specify a coupling $(X_{0}, X_{1})$ of even strongly log-concave random vectors and proceed to prove the inequality \eqref{eq: dimBMentZ} for this coupling. In the proof, it will be pointed out how we actually end up proving the stronger inequality \eqref{eq: globalcd2n} from which \eqref{eq: dimBMentZ} and its equality conditions follow, thereby completing the proof of Theorem \ref{thm: mainEnt}.  
\begin{proof}[Proof of Theorem \ref{thm: mainEnt}] 
Suppose $X_{0}, X_{1}$ are even strongly log-concave random vectors in $\R^n$ with distributions $\mu_{0}, \mu_{1}$, respectively. Let $T_{0}$ and $T_{1}$ be Brenier maps from the standard Gaussian $Z$ to $X_{0}$ and $X_{1}$, respectively. With the joint distribution $(T_{0}(Z), T_{1}(Z))$, and $X_{t} = (1-t) X_{0} + t X_{1}$, we want to prove $\frac{\d^{2}}{\d t^{2}} e^{- \frac{1}{n} D(X_{t} \Vert Z)} \leq 0$, that is, $\frac{\d^{2}}{ \d t^{2}} D(X_{t} \Vert Z) \geq \frac{1}{n} \left( \frac{\d}{ \d t} D(X_{t} \Vert Z) \right)^{2}$.

Suppose $\d \mu_{0} \propto e^ {- U_{0}} \d x $ and $\d \mu_{1} \propto e^{ - U_{1}} \d x$. First, we assume that there is a $ \kappa < \infty $ such that $\grad^{2} U_{0} , \grad^{2} U_{1} \leq \kappa I_{n \times n}$. By strong log-concavity, we already have $\grad^{2} U_{0} , \grad^{2} U_{1} \geq I_{n \times n}$. Thus, by Caffarelli's contraction theorem (or a form thereof, see the statement in \cite[Theorem 1]{ChewiPooladian23} or \cite{Kolesnikov11}), we get that $T_{0}, T_{1}$ are both $1$-Lipschitz, while $T^{-1}_{0}, T^{-1}_{1}$ are $\sqrt{\kappa}$-Lipschitz. If we write $T_{0} = \grad \phi_{0}, T_{1} = \grad \phi_{1}$ as gradients of convex functions, then these bounds translate to $\frac{1}{\sqrt{\kappa}} I_{n \times  n} \leq \grad^{2} \phi_{0}, \grad^{2} \phi_{1} \leq I_{n \times n}$. We infer from Proposition \ref{prop: velocityexists} that, if $\mu_{t} = {T_{t}}_{\#} \gamma$, $T_{t} = (1-t) T_{0}  +  t T_{1}$ (thus $\mu_{t}$ is the distribution of $X_{t}$), then a velocity field $v_{t}$ compatible with $\mu_{t}$ is well defined by $v_{t} (T_{t} (x)) = \frac{\d}{\d t} T_{t} (x)$. Further, the smoothness of the velocity field $v_{t}$ required to apply Proposition \ref{prop: derivativesofentropy} can be obtained by Caffarelli's regularity theory \cite{Caffarelli90, Caffarelli92}. Thus, $\frac{\d^{2}}{ \d t^{2}} D(X_{t} \Vert Z) \geq \frac{1}{n} \left( \frac{\d}{ \d t} D(X_{t} \Vert Z) \right)^{2}$ amounts to proving 
\begin{equation}
\int \gtil (v_{t} ) \d \mu_{t} \geq \frac{1}{n} \left( \int \divtil (v_{t}) \d \mu_{t}  \right)^{2}.
\end{equation}
From here we mimic the argument in \cite[Theorem 4.5]{AishwaryaRotem23} with some modifications, but applied to vector fields, where we also use an analogue of a crucial auxiliary construction from \cite{EskenazisMoschidis21}. We will prove the stronger inequality
\begin{equation} \label{eq: strongerlocal}
    \int \gtil (v_{t} ) \d \mu_{t} \geq 2 \int \vert v_{t} \vert^{2} \d \mu_{t} + \frac{1}{n} \left( \int \divtil (v_{t}) \d \mu_{t}  \right)^{2}.
\end{equation}
Let $u_{t}(x) = v_{t} (x) - \frac{l}{n} x $, where $l = \int \divtil (v_{t}) \d \mu_{t}$. Then,
\begin{equation}
    \begin{split}
        \tr (\grad v_{t})^{2} &= \tr \left( \grad u_{t} + \frac{l}{n} I_{n \times n} \right)^{2} = \tr \left( (\grad u_{t})^{2} + \frac{2l}{n} \grad u_{t} + \frac{l^{2}}{ n^{2}} I_{n \times  n}\right) \\
        &= \tr (\grad u_{t})^{2} + \frac{2l}{n} \div ( u_{t} ) + \frac{l^{2}}{n} =  \tr (\grad u_{t})^{2} + \frac{2l}{n} \div ( v_{t} ) - \frac{l^{2}}{n}\\
        &= \tr (\grad u_{t})^{2} + \frac{2l}{n} \left( \divtil (v_{t}) + \langle x , v_{t} \rangle   \right) - \frac{l^{2}}{n} \\
        &=  \tr (\grad u_{t})^{2} + \frac{2l}{n} \langle x , v_{t} \rangle + \left( \frac{2l}{n} \divtil (v_{t}) - \frac{l^{2}}{n} \right). 
    \end{split}
\end{equation}
To continue the proof in the mould of \cite[Theorem 4.5]{AishwaryaRotem23}, we would like to show that $ \int \tr (\grad u_{t})^{2} \d \mu_{t} \geq \sum_{i} \int \vert  u_{t}^{(i)} \vert^{2} \d \mu_{t}$, where we write $u_{t} = (u_{t}^{(1)}, \ldots , u_{t}^{(n)})$ in its components. This step is slightly more involved than in \cite{AishwaryaRotem23} (see Remark \ref{rem: notagradientfield}) and we are forced to take the following route. 

Using the chain rule, one has 
\[\grad u_{t} (T_{t}(x))\grad T_{t}(x) = \grad [u_{t}(T_{t}(x))] = \grad^{2} \phi_{1}(x)-\grad^{2} \phi_{0}(x)-\frac{l}{n} \grad T_{t}(x).\]
Let $A=\grad^{2} \phi_{1}(x)-\grad^{2} \phi_{0}(x)-\frac{l}{n} \grad T_{t}(x)$ and $B=(\grad T_{t}(x))^{-1}$, note that the matrices $A$ and $B$ are symmetric; furthermore, $B \geq I_{n \times n}$. Therefore, we have 

\begin{equation}
\begin{split}
\tr [\grad u_{t}(T_{t}(x))]^{2} &=\tr(AB)^{2}=\tr(ABAB)=\tr(B^{1/2}ABAB^{1/2}) \\
&\geq \tr (B^{1/2}A^{2}B^{1/2})= \tr(A^{2}B)=\tr(ABA) \geq \tr(A^2)\\
&=\tr (\grad [u_{t}(T_{t}(x))])^2, \\
\end{split}
\end{equation}
where both the inequalities follow from the monotonicity of trace under the positive semidefinite ordering.

The trace inequality above gives us the following: 

\begin{equation}
\begin{split}
 \int \tr (\grad u_{t})^{2} \d \mu_{t}
 &= \int \tr [\grad u_{t}(T_{t}(x))]^{2}  \d \gamma \geq  \int \tr (\grad [u_{t}(T_{t}(x))])^2 \d \gamma \\
 &= \sum_{i} \int \vert \grad [u_{t}^{(i)}(T_{t}(x))] \vert^{2} \d \gamma,\\
\end{split}
\end{equation}
where the last equality uses the fact that $u_{t}(T_{t}(x))$ is a gradient field, which can be seen from the expression $u_{t}(T_t(x))=(1-\frac{tl}{n})\grad \phi_{1}(x)-(1+\frac{(1-t)l}{n}) \grad \phi_{0}(x)$. Furthermore, since both $\grad \phi_{0}(x)$ and $\grad \phi_{1}(x)$ are odd, $u_{t}(T_t(x))$ is an odd function of $x$ and thus integrates to $0$ with respect to any even measure.

Applying the Gaussian Poincar\'e inequality to each component of $u_{t}(T_t(x))$, we obtain 
\begin{equation}
\begin{split}
    \sum_{i} \int \vert \grad [u_{t}^{(i)}(T_{t}(x))] \vert^{2} \d \gamma
    &\geq  \sum_{i} \int \vert u_{t}^{(i)}(T_{t}(x)) \vert^{2} \d \gamma = \sum_{i} \int \left( u_{t}^{(i)}(x) \right)^{2} \d \mu_{t} \\ 
    & = \sum_{t} \int \left( v_{t}^{(i)}(x) - \frac{l}{n} x^{(i)} \right)^{2} \d \mu_{t} \\
    &= \int \left( \vert v_{t} \vert^{2} - \frac{2l}{n} \langle x , v_{t} \rangle + \frac{l^{2}}{n^{2}} \vert x \vert^{2} \right) \d \mu_{t}.
    \end{split}
\end{equation}
Putting this into the expression for $\tr (\grad v_{t} )^{2}$ from before,
\begin{equation}
    \begin{split}
        \int \gtil (v_{t}) \d \mu_{t} &= \int  \left( \tr ( \grad v_{t})^{2}  
 +  \vert v_{t} \vert^{2} \right) \d \mu_{t} \\
 &= \int  \left( \tr ( \grad u_{t})^{2} + \frac{2l}{n} \langle x , v_{t} \rangle + \left( \frac{2l}{n} \divtil (v_{t}) - \frac{l^{2}}{n} \right)  
 +  \vert v_{t} \vert^{2} \right) \d \mu_{t} \\
 &\geq \int  \left(\vert v_{t} \vert^{2} - \frac{2l}{n} \langle x , v_{t} \rangle + \frac{l^{2}}{n^{2}} \vert x \vert^{2} + \frac{2l}{n} \langle x , v_{t} \rangle + \left( \frac{2l}{n} \divtil (v_{t}) - \frac{l^{2}}{n} \right)  
 +  \vert v_{t} \vert^{2} \right) \d \mu_{t} \\ 
 &= \int  \left( 2 \vert v_{t} \vert^{2}  + \frac{l^{2}}{n^{2}} \vert x \vert^{2}  + \left( \frac{2l}{n} \divtil (v_{t}) - \frac{l^{2}}{n} \right)  
  \right) \d \mu_{t} \\
  & \geq 2 \int    \vert v_{t} \vert^{2}  \d \mu_{t} + \int  \left( \frac{2l}{n} \divtil (v_{t}) - \frac{l^{2}}{n} \right) \d \mu_{t} =   2 \int    \vert v_{t} \vert^{2}  \d \mu_{t} +  \frac{1}{n} l^{2},
    \end{split}
\end{equation}
    where the last equality follows from the definition of $l$. This establishes inequality \eqref{eq: strongerlocal}.

    Note that, 
    \begin{equation}
    \begin{split}
        \int \vert v_{t} \vert^{2} \d \mu_{t} &= \int \vert v_{t} (T_{t}(x)) \vert^{2} \d \gamma (x) = \int \vert \frac{\d}{\d t} T_{t} (x) \vert^{2} \d \gamma (x) \\
        &= \int \vert T_{0} (x) - T_{1} (x) \vert^{2} \d \gamma = \E \vert X_{0} - X_{1} \vert^{2}.
    \end{split}    
\end{equation}
Thus, we have shown that
\begin{equation} \label{eq: localcd2nreg}
    \frac{\d^{2}}{\d t^{2}} D(X_{t} \Vert Z) \geq 2 \E \vert X_{0} - X_{1} \vert^{2} + \frac{1}{n} \left( \frac{\d}{\d t} D(X_{t} \Vert Z) \right)^{2},
\end{equation}
under the assumed regularity on $X_{0},X_{1}$ and the chosen coupling $(X_{0},X_{1})$.
We claim that, 
\begin{equation} \label{eq: globalcd2nreg}
    e^{-\frac{1}{n} D (X_{t} \Vert Z)} \geq \sigma^{(1-t)} \left( \left( \E \vert X_{0} - X_{1} \vert^{2} \right)^{\frac{1}{2}} \right) e^{-\frac{1}{n} D (X_{0} \Vert Z)} + \sigma^{(t)} \left( \left( \E \vert X_{0} - X_{1} \vert^{2} \right)^{\frac{1}{2}} \right)  e^{-\frac{1}{n} D (X_{1} \Vert Z)},
\end{equation}
where 
 \begin{equation}
\sigma^{(t)}(\theta) = 
\begin{cases}
    \frac{\sin \left( \sqrt{\frac{2}{n}} t \theta \right)}{\sin \left( \sqrt{\frac{2}{n}}  \theta \right)}, & \textnormal{if } \theta < \sqrt{\frac{n}{2}} \pi, \\
    \infty, &\textnormal{ otherwise,}\\
\end{cases}
  \end{equation}
  for $t \in [0,1]$. This claim follows from the local version in inequality \eqref{eq: localcd2nreg} and a comparison principle, as applied in \cite[Lemma 5.3]{AishwaryaRotem23} or \cite[Lemma 2.2]{ErbarKuwadaSturm15}. Additionally, keeping in mind the triangle inequality for the Wasserstein metric and \cite[Remark 7]{AishwaryaRotem23},  $\left( \E \vert X_{0} - X_{1} \vert^{2} \right)^{\frac{1}{2}}$ is always less than $\sqrt{n/2} \pi$.

    Since $\sigma^{(t)} \geq t$, we have proved Theorem \ref{thm: mainEnt} (and the claim in Remark \ref{rem: curvaturemain})under the assumption that $\grad^{2} U_{0} , \grad^{2} U_{1} \leq \kappa I_{n \times n} < \infty $. This assumption can be removed via the following approximation argument. 

Let $\epsilon \in (0,1/2)$, and define the Ornstein--Uhlenbeck evolutes $X^{\epsilon}_{i}:=\sqrt{1-\epsilon} X_{i} +\sqrt{\epsilon} Z'$ for $i=0,1$, where $Z'$ is a standard Gaussian random vector independent of both the $X^{\epsilon}_{i}$. Denote by $\mu^{\epsilon}_{i}$ the distribution of $X^{\epsilon}_{i}$, and suppose that $\rho^{\epsilon}_{i}=\frac{\d \mu^{\epsilon}_{i}}{ \d x} = e^{ - U^{\epsilon}_{i}(x)}$ for $i=0,1$. Obviously, the $X^{\epsilon}_{i}$'s  are even random vectors in $\R^n$. Moreover, since the Ornstein--Uhlenbeck process (see, for example, \cite{BakryGentilLedoux14}) preserves strongly log-concavity of measures,  they are also strongly log-concave. This means $\grad^{2} U^{\epsilon}_{i} \geq I_{n \times n}$ for $\forall \epsilon \in (0,1/2)$. Moreover, a direct calculation reveals that $\grad^{2} U^{\epsilon}_{i} \leq  \frac{1}{\epsilon} I_{n \times n}$ (see, for example \cite{KlartagPutterman23}).

As  $\epsilon \downarrow 0$, we have $ D ( X^{\epsilon}_{i} \Vert Z)  \rightarrow D ( X_{i} \Vert Z)$ for $i=0,1$. This can be seen from the expression 
\[
D ( X^{\epsilon}_{i} \Vert Z)=\int \rho^{\epsilon}_{i} \log \rho^{\epsilon}_{i} \d x + \frac{1}{2} \E \vert X^{\epsilon}_{i} \vert_2^2 + \frac{n}{2} \log (2 \pi),  \]
and \cite[Remark 10]{MadimanWang14}.

Now choose a decreasing sequence of $\epsilon_k$ that converges to 0, note that as $k \rightarrow \infty$, $\mu^{\epsilon_k}_{i}$ converges weakly to $\mu_{i}$ for $i=0,1$, respectively, therefore by Prokhorov's theorem, both sequences $\{\mu^{\epsilon_k}_{0}\}$ and $\{\mu^{\epsilon_k}_{1} \}$ are tight in $\mathcal{P}(\R^n)$. Suppose that for each $k$,  $\pi^{\epsilon_k}$ is the coupling (that is, the joint distribution of $(X_{0}^{\epsilon_k}, X_{1}^{\epsilon_k})$) that we have used in the previous part of the proof, one can show that $\{ \pi^{\epsilon_k} \}$ is also a tight sequence in $\mathcal{P}(\R^n \times \R^n)$, whence it admits a weakly convergent subsequence, and without loss of generality, we assume that $(\pi^{\epsilon_k})$ converges to a coupling $\pi$ of $\mu_{0}$ and $\mu_{1}$. Since the relative entropy $D(\mu \Vert \nu)$ is lower semicontinuous on $\mathcal{P}(\R^n) \times \mathcal{P}(\R^n)$, where $\mathcal{P}(\R^n)$ is equipped with the weak topology, we have that 
\[
\liminf_{k \rightarrow \infty} D([(x,y) \mapsto (1-t)x+ty]_{\#}\pi^{\epsilon_k} \Vert \gamma) \geq D([(x,y ) \mapsto (1-t)x+ty]_{\#} \pi \Vert \gamma).
\]
With the last observation, and that we already have
\begin{equation} 
    e^{-\frac{1}{n} D (X^{\epsilon_k}_{t} \Vert Z)} \geq \sigma^{(1-t)} \left( \theta_{\epsilon_k}\right) e^{-\frac{1}{n} D (X^{\epsilon_k}_{0} \Vert Z)} + \sigma^{(t)} \left(\theta_{\epsilon_k} \right)  e^{-\frac{1}{n} D (X_{1} \Vert Z)},
\end{equation}
for $X^{\epsilon_k}_{t}=(1-t) X^{\epsilon_k}_{0} + t X^{\epsilon_k}_{1}$ and $\theta_{\epsilon_k} = \left( \E \vert X^{\epsilon_k}_{0} - X^{\epsilon_k}_{1} \vert^{2} \right)^{\frac{1}{2}}$, we can send
$k \to \infty$ to complete the proof of both Theorem \ref{thm: mainEnt} and the claim in Remark \ref{rem: curvaturemain}.
\end{proof}
\begin{rem} \label{rem: notagradientfield}
     The vector fields that appear in \cite{AishwaryaRotem23} are gradient fields, which simplifies things. For example, if $u_{t}$ in the proof above was a gradient field, one could simply apply the Poincar\'e inequality (with respect to $\mu_{t}$) to the components of $u_{t}$ to get $\int \tr (\grad u_{t})^{2} \d \mu_{t} \geq \int \vert u_{t} \vert^{2} \d \mu_{t}$.
\end{rem}

\begin{proof}[Proof of Theorem \ref{thm: mainbbl}]
    Set $F = \log f, G = \log g,$ and $H = \log h$. We will use H\"older's inequality in the form
    \begin{equation}
        M_{p}^{t} (a,b) M_{q}^{t} (x,y) \geq M_{r}^{t} (ax, by),
    \end{equation}
    where $q = \frac{1}{n}$ and $\frac{1}{p} + \frac{1}{q} = \frac{1}{r}$,
applied to
\begin{equation}
    x= e^{- D(\mu_{0} \Vert \gamma)}, y = e^{- D(\mu_{1} \Vert \gamma)} , a = e^{\int F \d \mu_{0}}, b = e^{\int G \d \mu_{1} },
  \end{equation}  
  where $\d \mu_{0} \propto f \d \gamma$, and $\d \mu_{1} \propto g \d \gamma$ are probability measures.
  Thus, we get
  \begin{equation} \label{eq: holderapplied}
      \begin{split}
          & \left( (1-t) e^{p \int F \d \mu_{0}} + t e^{p \int G \d \mu_{1}}    \right)^{\frac{1}{p}} \left( (1-t) e^{ - \frac{1}{n} D (\mu_{0} \Vert \gamma)} + t e^{ - \frac{1}{n} D (\mu_{1} \Vert \gamma)}    \right)^{\frac{1}{q}}  \\
    &\geq \left(   (1-t) e^{r \left(  \int F \d \mu_{0} - D(\mu_{0} \Vert \gamma ) \right)} + t e^{r \left(  \int G \d \mu_{1} - D(\mu_{1} \Vert \gamma ) \right)} \right)^{\frac{1}{r}} \\
    &=  \left( (1-t) e^{r \log \int e^{F} \d \gamma} + t e^{r \log \int e^{G} \d \gamma} \right)^{\frac{1}{r}} \\
    &= \left( (1-t) \left( \int e^{F} \d \gamma \right)^{r} + t \left(  \int e^{G} \d \gamma \right)^{r} \right)^{\frac{1}{r}}.
      \end{split}
  \end{equation}
  Taking the expectation of 
        \begin{equation}
      H ((1-t) X_{0} +  t X_{1}) \geq \log \left(  (1-t) e^{pF(X_{0})} + t e^{p G(X_{1})} \right)^{\frac{1}{p}},
  \end{equation}
 with respect to any joint distribution of $(X_{0}, X_{1})$ and combining it with the joint convexity of the function 
  \begin{equation}
      \Psi_{t} (u,v) = \log \left(  (1-t) e^{u} + t e^{v}  \right)
  \end{equation}
  for every fixed $t$ \cite[Lemma 2.11]{ErbarKuwadaSturm15}, we see that
  \begin{equation} \label{eq: funcpieceDV}
      e^{\int H \d \mu_{t}} \geq \left( (1-t) e^{p \int F \d \mu_{0}} + t e^{p \int G \d \mu_{1}}    \right)^{\frac{1}{p}}.
  \end{equation}
 Moreover, we already have
 \begin{equation} \label{eq: entpieceDV}
     e^{ - \frac{1}{n} D(\mu_{t} \Vert \gamma)} \geq (1-t) e^{ - \frac{1}{n} D (\mu_{0} \Vert \gamma)} + t e^{ - \frac{1}{n} D (\mu_{1} \Vert \gamma)},
\end{equation}
  when $\mu_{t}$ is the distribution of $X_{t} = (1-t)X_{0} + t X_{1}$ for the joint distribution $(X_{0}, X_{1})$ from Theorem \ref{thm: mainEnt}. Putting equations \eqref{eq: funcpieceDV} and \eqref{eq: entpieceDV} together and invoking the ``inequality part'' of Donsker--Varadhan duality, we get
  \begin{equation}
      \begin{split}
      & \int e^{H} \d \gamma  \geq e^{\int H \d \mu_{t}}   e^{ -  D(\mu_{t} \Vert \gamma)}\\
      & \geq \left( (1-t) e^{p \int F \d \mu_{0}} + t e^{p \int G \d \mu_{1}}    \right)^{\frac{1}{p}} \left( (1-t) e^{ - \frac{1}{n} D (\mu_{0} \Vert \gamma)} + t e^{ - \frac{1}{n} D (\mu_{1} \Vert \gamma)}    \right)^{\frac{1}{q}}, 
      \end{split}
  \end{equation}
  which completes the proof by \eqref{eq: holderapplied}.
\end{proof}

\bibliographystyle{amsplain}
\bibliography{shoulderofgiants}

@article {AishwaryaLi25KP,
    AUTHOR = {Aishwarya, Gautam and Li, Dongbin},
     TITLE = {The {K}neser--{P}oulsen {P}henomena for {E}ntropy},
   JOURNAL = {Int. Math. Res. Not. IMRN},
  FJOURNAL = {International Mathematics Research Notices. IMRN},
      YEAR = {2025},
    NUMBER = {12},
     PAGES = {rnaf140},
      ISSN = {1073-7928,1687-0247},
   MRCLASS = {52A20 (94)},
  MRNUMBER = {4934643},
       DOI = {10.1093/imrn/rnaf140},
       URL = {https://doi.org/10.1093/imrn/rnaf140},
}

@article{MalliarisMelbourneRobertoRoysdon25,
      title={Functional Liftings of Restricted Geometric Inequalities}, 
      author={Andreas Malliaris and James Melbourne and Cyril Roberto and Michael Roysdon},
      journal={arXiv preprint arXiv:2508.15247},
     year={2025}
}

@article{Cordero-ErausquinEskenazis25,
      title={Concavity principles for weighted marginals}, 
      author={Dario Cordero-Erausquin and Alexandros Eskenazis},
      journal={arXiv preprint arXiv:2506.16941},
     year={2025}
}

@article{Kolesnikov11,
  title={Mass transportation and contractions},
  author={Kolesnikov, Alexander V},
  journal={arXiv preprint arXiv:1103.1479},
  year={2011}
}

@article {GozlanRobertoSamsonTetali21,
    AUTHOR = {Gozlan, Nathael and Roberto, Cyril and Samson, Paul-Marie and
              Tetali, Prasad},
     TITLE = {Transport proofs of some discrete variants of the
              {P}r\'ekopa-{L}eindler inequality},
   JOURNAL = {Ann. Sc. Norm. Super. Pisa Cl. Sci. (5)},
  FJOURNAL = {Annali della Scuola Normale Superiore di Pisa. Classe di
              Scienze. Serie V},
    VOLUME = {22},
      YEAR = {2021},
    NUMBER = {3},
     PAGES = {1207--1232},
      ISSN = {0391-173X,2036-2145},
   MRCLASS = {60E15 (26D15 49Q22)},
  MRNUMBER = {4334317},
}

@incollection {Gromov90,
    AUTHOR = {Gromov, M.},
     TITLE = {Convex sets and {K}\"ahler manifolds},
 BOOKTITLE = {Advances in differential geometry and topology},
     PAGES = {1--38},
 PUBLISHER = {World Sci. Publ., Teaneck, NJ},
      YEAR = {1990},
      ISBN = {981-02-0494-9; 981-02-0495-7},
   MRCLASS = {52A40 (53C55)},
  MRNUMBER = {1095529},
MRREVIEWER = {Y.\ Tashiro},
}

@article {AleskerDarMilman99,
    AUTHOR = {Alesker, S. and Dar, S. and Milman, V.},
     TITLE = {A remarkable measure preserving diffeomorphism between two
              convex bodies in {${\bf R}^n$}},
   JOURNAL = {Geom. Dedicata},
  FJOURNAL = {Geometriae Dedicata},
    VOLUME = {74},
      YEAR = {1999},
    NUMBER = {2},
     PAGES = {201--212},
      ISSN = {0046-5755,1572-9168},
   MRCLASS = {52A20 (28D05 49N60 52A39 52A40)},
  MRNUMBER = {1674116},
MRREVIEWER = {Robert\ J.\ McCann},
       DOI = {10.1023/A:1005087216335},
       URL = {https://doi.org/10.1023/A:1005087216335},
}

@article {ChewiPooladian23,
    AUTHOR = {Chewi, Sinho and Pooladian, Aram-Alexandre},
     TITLE = {An entropic generalization of {C}affarelli's contraction
              theorem via covariance inequalities},
   JOURNAL = {C. R. Math. Acad. Sci. Paris},
  FJOURNAL = {Comptes Rendus Math\'ematique. Acad\'emie des Sciences. Paris},
    VOLUME = {361},
      YEAR = {2023},
     PAGES = {1471--1482},
      ISSN = {1631-073X,1778-3569},
   MRCLASS = {60E15 (49N60 49Q22)},
  MRNUMBER = {4683324},
       DOI = {10.5802/crmath.486},
       URL = {https://doi.org/10.5802/crmath.486},
}

@article {Slomka24,
    AUTHOR = {Slomka, Boaz A.},
     TITLE = {A remark on discrete {B}runn-{M}inkowski type inequalities via
              transportation of measure},
   JOURNAL = {Israel J. Math.},
  FJOURNAL = {Israel Journal of Mathematics},
    VOLUME = {261},
      YEAR = {2024},
    NUMBER = {2},
     PAGES = {791--807},
      ISSN = {0021-2172,1565-8511},
   MRCLASS = {52A40},
  MRNUMBER = {4775738},
       DOI = {10.1007/s11856-023-2596-3},
       URL = {https://doi.org/10.1007/s11856-023-2596-3},
}

@article {DOnskerVaradhan83,
    AUTHOR = {Donsker, M. D. and Varadhan, S. R. S.},
     TITLE = {Asymptotic evaluation of certain {M}arkov process expectations
              for large time. {IV}},
   JOURNAL = {Comm. Pure Appl. Math.},
  FJOURNAL = {Communications on Pure and Applied Mathematics},
    VOLUME = {36},
      YEAR = {1983},
    NUMBER = {2},
     PAGES = {183--212},
      ISSN = {0010-3640,1097-0312},
   MRCLASS = {60J25},
  MRNUMBER = {690656},
MRREVIEWER = {D.\ W.\ Stroock},
       DOI = {10.1002/cpa.3160360204},
       URL = {https://doi.org/10.1002/cpa.3160360204},
}

@article {BoroczkyLutwakYangZhang12,
    AUTHOR = {B\"or\"oczky, K\'aroly J. and Lutwak, Erwin and Yang, Deane
              and Zhang, Gaoyong},
     TITLE = {The log-{B}runn-{M}inkowski inequality},
   JOURNAL = {Adv. Math.},
  FJOURNAL = {Advances in Mathematics},
    VOLUME = {231},
      YEAR = {2012},
    NUMBER = {3-4},
     PAGES = {1974--1997},
      ISSN = {0001-8708,1090-2082},
   MRCLASS = {52A40},
  MRNUMBER = {2964630},
MRREVIEWER = {Eugenia\ Saor\'in G\'omez},
       DOI = {10.1016/j.aim.2012.07.015},
       URL = {https://doi.org/10.1016/j.aim.2012.07.015},
}

@article {BrascampLieb76BBL,
    AUTHOR = {Brascamp, Herm Jan and Lieb, Elliott H.},
     TITLE = {On extensions of the {B}runn-{M}inkowski and
              {P}r\'ekopa-{L}eindler theorems, including inequalities for
              log concave functions, and with an application to the
              diffusion equation},
   JOURNAL = {J. Functional Analysis},
  FJOURNAL = {Journal of Functional Analysis},
    VOLUME = {22},
      YEAR = {1976},
    NUMBER = {4},
     PAGES = {366--389},
      ISSN = {0022-1236},
   MRCLASS = {26A87 (26A51)},
  MRNUMBER = {450480},
MRREVIEWER = {L.\ Leindler},
       DOI = {10.1016/0022-1236(76)90004-5},
       URL = {https://doi.org/10.1016/0022-1236(76)90004-5},
}

@article{KlartagPutterman23,
  AUTHOR = {Klartag, Bo'az and Putterman, Eli},
     TITLE = {Spectral monotonicity under {G}aussian convolution},
   JOURNAL = {Ann. Fac. Sci. Toulouse Math. (6)},
  FJOURNAL = {Annales de la Facult\'e{} des Sciences de Toulouse.
              Math\'ematiques. S\'erie 6},
    VOLUME = {32},
      YEAR = {2023},
    NUMBER = {5},
     PAGES = {939--967},
      ISSN = {0240-2963,2258-7519},
   MRCLASS = {35R60 (35K40 35P05 58J35 58J65 60E15)},
  MRNUMBER = {4748461},
MRREVIEWER = {Rodica\ Luca},
}

@article{KimMilman12,
AUTHOR = {Kim, Young-Heon and Milman, Emanuel},
     TITLE = {A generalization of {C}affarelli's contraction theorem via
              (reverse) heat flow},
   JOURNAL = {Math. Ann.},
  FJOURNAL = {Mathematische Annalen},
    VOLUME = {354},
      YEAR = {2012},
    NUMBER = {3},
     PAGES = {827--862},
      ISSN = {0025-5831,1432-1807},
   MRCLASS = {35K20 (35B50 37C10)},
  MRNUMBER = {2983070},
MRREVIEWER = {Daniela\ Ro\c su},
       DOI = {10.1007/s00208-011-0749-x},
       URL = {https://doi.org/10.1007/s00208-011-0749-x},
}

@article{Caffarelli92,
    AUTHOR = {Caffarelli, Luis A.},
     TITLE = {The regularity of mappings with a convex potential},
   JOURNAL = {J. Amer. Math. Soc.},
  FJOURNAL = {Journal of the American Mathematical Society},
    VOLUME = {5},
      YEAR = {1992},
    NUMBER = {1},
     PAGES = {99--104},
      ISSN = {0894-0347,1088-6834},
   MRCLASS = {35B65 (35A30 35J60)},
  MRNUMBER = {1124980},
       DOI = {10.2307/2152752},
       URL = {https://doi.org/10.2307/2152752},
}

@article{Caffarelli90,
    AUTHOR = {Caffarelli, L. A.},
     TITLE = {A localization property of viscosity solutions to the
              {M}onge-{A}mp\`ere equation and their strict convexity},
   JOURNAL = {Ann. of Math. (2)},
  FJOURNAL = {Annals of Mathematics. Second Series},
    VOLUME = {131},
      YEAR = {1990},
    NUMBER = {1},
     PAGES = {129--134},
      ISSN = {0003-486X,1939-8980},
   MRCLASS = {35B65 (35J60)},
  MRNUMBER = {1038359},
MRREVIEWER = {John\ Urbas},
       DOI = {10.2307/1971509},
       URL = {https://doi.org/10.2307/1971509},
}

@article{Livshyts23,
    AUTHOR = {Livshyts, Galyna V.},
     TITLE = {A universal bound in the dimensional {B}runn-{M}inkowski
              inequality for log-concave measures},
   JOURNAL = {Trans. Amer. Math. Soc.},
  FJOURNAL = {Transactions of the American Mathematical Society},
    VOLUME = {376},
      YEAR = {2023},
    NUMBER = {9},
     PAGES = {6663--6680},
      ISSN = {0002-9947,1088-6850},
   MRCLASS = {52A23 (52A38)},
  MRNUMBER = {4630787},
MRREVIEWER = {Daniel\ John\ Fresen},
       DOI = {10.1090/tran/8976},
       URL = {https://doi.org/10.1090/tran/8976},
}

@article{Cordero-ErasquinRotem23,
    AUTHOR = {Cordero-Erausquin, Dario and Rotem, Liran},
     TITLE = {Improved log-concavity for rotationally invariant measures of
              symmetric convex sets},
   JOURNAL = {Ann. Probab.},
  FJOURNAL = {The Annals of Probability},
    VOLUME = {51},
      YEAR = {2023},
    NUMBER = {3},
     PAGES = {987--1003},
      ISSN = {0091-1798,2168-894X},
   MRCLASS = {52A40 (26D10 60E15)},
  MRNUMBER = {4583060},
MRREVIEWER = {Tomasz\ Tkocz},
       DOI = {10.1214/22-aop1604},
       URL = {https://doi.org/10.1214/22-aop1604},
}

@article{AishwaryaRotem23,
      title={New {B}runn--{M}inkowski and functional inequalities via convexity of entropy}, 
      author={Gautam Aishwarya and Liran Rotem},
      journal={arXiv preprint arXiv:2311.05446v2},
      year={2024}, 
}

@article{KolesnikovMilman18,
    AUTHOR = {Kolesnikov, Alexander V. and Milman, Emanuel},
     TITLE = {Poincar\'e{} and {B}runn-{M}inkowski inequalities on the
              boundary of weighted {R}iemannian manifolds},
   JOURNAL = {Amer. J. Math.},
  FJOURNAL = {American Journal of Mathematics},
    VOLUME = {140},
      YEAR = {2018},
    NUMBER = {5},
     PAGES = {1147--1185},
      ISSN = {0002-9327,1080-6377},
   MRCLASS = {53C21 (52A40 58D25)},
  MRNUMBER = {3862061},
MRREVIEWER = {Yunwei\ Xia},
       DOI = {10.1353/ajm.2018.0027},
       URL = {https://doi.org/10.1353/ajm.2018.0027},
}

@article{KolesnikovMilman17,
    AUTHOR = {Kolesnikov, Alexander V. and Milman, Emanuel},
     TITLE = {Brascamp-{L}ieb-type inequalities on weighted {R}iemannian
              manifolds with boundary},
   JOURNAL = {J. Geom. Anal.},
  FJOURNAL = {Journal of Geometric Analysis},
    VOLUME = {27},
      YEAR = {2017},
    NUMBER = {2},
     PAGES = {1680--1702},
      ISSN = {1050-6926,1559-002X},
   MRCLASS = {53C21 (58J32 58J50)},
  MRNUMBER = {3625169},
MRREVIEWER = {Mohammed\ El A\"idi, Universidad Nacional de Colombia},
       DOI = {10.1007/s12220-016-9736-5},
       URL = {https://doi.org/10.1007/s12220-016-9736-5},
}

@article{LivshytsMarsigliettiNayarZvavitch17,
    AUTHOR = {Livshyts, Galyna and Marsiglietti, Arnaud and Nayar, Piotr and
              Zvavitch, Artem},
     TITLE = {On the {B}runn-{M}inkowski inequality for general measures
              with applications to new isoperimetric-type inequalities},
   JOURNAL = {Trans. Amer. Math. Soc.},
  FJOURNAL = {Transactions of the American Mathematical Society},
    VOLUME = {369},
      YEAR = {2017},
    NUMBER = {12},
     PAGES = {8725--8742},
      ISSN = {0002-9947,1088-6850},
   MRCLASS = {52A40 (60G15)},
  MRNUMBER = {3710641},
MRREVIEWER = {Tomasz\ Tkocz},
       DOI = {10.1090/tran/6928},
       URL = {https://doi.org/10.1090/tran/6928},
}

@article{KolesnikovLivshyts21,
    AUTHOR = {Kolesnikov, Alexander V. and Livshyts, Galyna V.},
     TITLE = {On the {G}ardner-{Z}vavitch conjecture: symmetry in
              inequalities of {B}runn-{M}inkowski type},
   JOURNAL = {Adv. Math.},
  FJOURNAL = {Advances in Mathematics},
    VOLUME = {384},
      YEAR = {2021},
     PAGES = {Paper No. 107689, 23},
      ISSN = {0001-8708,1090-2082},
   MRCLASS = {52A40},
  MRNUMBER = {4238914},
MRREVIEWER = {Alexandros\ Eskenazis},
       DOI = {10.1016/j.aim.2021.107689},
       URL = {https://doi.org/10.1016/j.aim.2021.107689},
}

@article{NayarTkocz13,
    AUTHOR = {Nayar, Piotr and Tkocz, Tomasz},
     TITLE = {A note on a {B}runn-{M}inkowski inequality for the {G}aussian
              measure},
   JOURNAL = {Proc. Amer. Math. Soc.},
  FJOURNAL = {Proceedings of the American Mathematical Society},
    VOLUME = {141},
      YEAR = {2013},
    NUMBER = {11},
     PAGES = {4027--4030},
      ISSN = {0002-9939,1088-6826},
   MRCLASS = {52A40 (60G15)},
  MRNUMBER = {3091793},
MRREVIEWER = {Jean-Charles\ Pinoli},
       DOI = {10.1090/S0002-9939-2013-11609-6},
       URL = {https://doi.org/10.1090/S0002-9939-2013-11609-6},
}

@article {Borell75,
    AUTHOR = {Borell, C.},
     TITLE = {Convex set functions in {$d$}-space},
   JOURNAL = {Period. Math. Hungar.},
  FJOURNAL = {Periodica Mathematica Hungarica. Journal of the J\'anos Bolyai
              Mathematical Society},
    VOLUME = {6},
      YEAR = {1975},
    NUMBER = {2},
     PAGES = {111--136},
      ISSN = {0031-5303,1588-2829},
   MRCLASS = {26A86 (60A10)},
  MRNUMBER = {404559},
MRREVIEWER = {M.\ D.\ Perlman},
       DOI = {10.1007/BF02018814},
       URL = {https://doi.org/10.1007/BF02018814},
}

@book{AmbrosioGigliSavare08,
  AUTHOR = {Ambrosio, Luigi and Gigli, Nicola and Savar\'e, Giuseppe},
     TITLE = {Gradient flows in metric spaces and in the space of
              probability measures},
    SERIES = {Lectures in Mathematics ETH Z\"urich},
   EDITION = {Second},
 PUBLISHER = {Birkh\"auser Verlag, Basel},
      YEAR = {2008},
     PAGES = {x+334},
      ISBN = {978-3-7643-8721-1},
   MRCLASS = {49-02 (28A33 35K55 35K90 49Q20 60B05)},
  MRNUMBER = {2401600},
MRREVIEWER = {Pietro\ Celada},
}

@article{ErbarKuwadaSturm15,
    AUTHOR = {Erbar, Matthias and Kuwada, Kazumasa and Sturm, Karl-Theodor},
     TITLE = {On the equivalence of the entropic curvature-dimension
              condition and {B}ochner's inequality on metric measure spaces},
   JOURNAL = {Invent. Math.},
  FJOURNAL = {Inventiones Mathematicae},
    VOLUME = {201},
      YEAR = {2015},
    NUMBER = {3},
     PAGES = {993--1071},
      ISSN = {0020-9910,1432-1297},
   MRCLASS = {53C23},
  MRNUMBER = {3385639},
MRREVIEWER = {Fabrice\ Baudoin},
       DOI = {10.1007/s00222-014-0563-7},
       URL = {https://doi.org/10.1007/s00222-014-0563-7},
}

@article{Gardner02,
    AUTHOR = {Gardner, R. J.},
     TITLE = {The {B}runn-{M}inkowski inequality},
   JOURNAL = {Bull. Amer. Math. Soc. (N.S.)},
  FJOURNAL = {American Mathematical Society. Bulletin. New Series},
    VOLUME = {39},
      YEAR = {2002},
    NUMBER = {3},
     PAGES = {355--405},
      ISSN = {0273-0979,1088-9485},
   MRCLASS = {26D15 (52A40)},
  MRNUMBER = {1898210},
MRREVIEWER = {Apostolos\ A.\ Giannopoulos},
       DOI = {10.1090/S0273-0979-02-00941-2},
       URL = {https://doi.org/10.1090/S0273-0979-02-00941-2},
}

@article{GardnerZvavitch10,
    AUTHOR = {Gardner, Richard J. and Zvavitch, Artem},
     TITLE = {Gaussian {B}runn-{M}inkowski inequalities},
   JOURNAL = {Trans. Amer. Math. Soc.},
  FJOURNAL = {Transactions of the American Mathematical Society},
    VOLUME = {362},
      YEAR = {2010},
    NUMBER = {10},
     PAGES = {5333--5353},
      ISSN = {0002-9947,1088-6850},
   MRCLASS = {52A40 (60E15 60G15)},
  MRNUMBER = {2657682},
       DOI = {10.1090/S0002-9947-2010-04891-3},
       URL = {https://doi.org/10.1090/S0002-9947-2010-04891-3},
}

@article{EskenazisMoschidis21,
    AUTHOR = {Eskenazis, Alexandros and Moschidis, Georgios},
     TITLE = {The dimensional {B}runn-{M}inkowski inequality in {G}auss
              space},
   JOURNAL = {J. Funct. Anal.},
  FJOURNAL = {Journal of Functional Analysis},
    VOLUME = {280},
      YEAR = {2021},
    NUMBER = {6},
     PAGES = {Paper No. 108914, 19},
      ISSN = {0022-1236,1096-0783},
   MRCLASS = {52A40 (52A20 60E15)},
  MRNUMBER = {4193767},
MRREVIEWER = {Grigory\ M.\ Ivanov},
       DOI = {10.1016/j.jfa.2020.108914},
       URL = {https://doi.org/10.1016/j.jfa.2020.108914},
}

@book{Villani09,
AUTHOR = {Villani, C\'edric},
     TITLE = {Optimal transport},
    SERIES = {Grundlehren der mathematischen Wissenschaften [Fundamental
              Principles of Mathematical Sciences]},
    VOLUME = {338},
      NOTE = {Old and new},
 PUBLISHER = {Springer-Verlag, Berlin},
      YEAR = {2009},
     PAGES = {xxii+973},
      ISBN = {978-3-540-71049-3},
   MRCLASS = {49-02 (28A75 37J50 49Q20 53C23 58E30)},
  MRNUMBER = {2459454},
MRREVIEWER = {Dario\ Cordero-Erausquin},
       DOI = {10.1007/978-3-540-71050-9},
       URL = {https://doi.org/10.1007/978-3-540-71050-9},
}

@article{Caffarelli00,
    AUTHOR = {Caffarelli, Luis A.},
     TITLE = {Monotonicity properties of optimal transportation and the
              {FKG} and related inequalities},
   JOURNAL = {Comm. Math. Phys.},
  FJOURNAL = {Communications in Mathematical Physics},
    VOLUME = {214},
      YEAR = {2000},
    NUMBER = {3},
     PAGES = {547--563},
      ISSN = {0010-3616,1432-0916},
   MRCLASS = {60E15 (35R35 82B31)},
  MRNUMBER = {1800860},
MRREVIEWER = {Ludger\ R\"uschendorf},
       DOI = {10.1007/s002200000257},
       URL = {https://doi.org/10.1007/s002200000257},
}

@article{MadimanWang14,
  AUTHOR = {Wang, Liyao and Madiman, Mokshay},
     TITLE = {Beyond the entropy power inequality, via rearrangements},
   JOURNAL = {IEEE Trans. Inform. Theory},
  FJOURNAL = {Institute of Electrical and Electronics Engineers.
              Transactions on Information Theory},
    VOLUME = {60},
      YEAR = {2014},
    NUMBER = {9},
     PAGES = {5116--5137},
      ISSN = {0018-9448,1557-9654},
   MRCLASS = {62B10 (94A17)},
  MRNUMBER = {3252379},
       DOI = {10.1109/TIT.2014.2338852},
       URL = {https://doi.org/10.1109/TIT.2014.2338852},
}

@book {BakryGentilLedoux14,
    AUTHOR = {Bakry, Dominique and Gentil, Ivan and Ledoux, Michel},
     TITLE = {Analysis and geometry of {M}arkov diffusion operators},
    SERIES = {Grundlehren der mathematischen Wissenschaften [Fundamental
              Principles of Mathematical Sciences]},
    VOLUME = {348},
 PUBLISHER = {Springer, Cham},
      YEAR = {2014},
     PAGES = {xx+552},
      ISBN = {978-3-319-00226-2; 978-3-319-00227-9},
   MRCLASS = {60J25 (58J65 60J35 60J60)},
  MRNUMBER = {3155209},
MRREVIEWER = {Ming\ Liao},
       DOI = {10.1007/978-3-319-00227-9},
       URL = {https://doi.org/10.1007/978-3-319-00227-9},
}

@book{Villani03,
    AUTHOR = {Villani, C\'edric},
     TITLE = {Topics in optimal transportation},
    SERIES = {Graduate Studies in Mathematics},
    VOLUME = {58},
 PUBLISHER = {American Mathematical Society, Providence, RI},
      YEAR = {2003},
     PAGES = {xvi+370},
      ISBN = {0-8218-3312-X},
   MRCLASS = {90-02 (28D05 35B65 35J60 49N90 49Q20 90B20)},
  MRNUMBER = {1964483},
       DOI = {10.1090/gsm/058},
       URL = {https://doi.org/10.1090/gsm/058},
}
\end{document}